\documentclass[11pt]{amsart}
\usepackage{mathrsfs}
\usepackage{amscd}
\usepackage{amsmath}
\usepackage{graphicx}
\usepackage{amsfonts}
\usepackage{amssymb}
\usepackage{color}
\usepackage{hyperref}
\newtheorem{set}{set}[section]
\newtheorem{Corollary}[set]{Corollary}

\newtheorem{Proposition}[set]{Proposition}

\newtheorem{Theorem}[set]{Theorem}
\begin{document}

\title[Algebraic Maximal Numerical Ranges on $C^*$-Algebras]
{Algebraic Maximal Numerical Range and its preservers of Triple Products on $C^*$-Algebras}

\author{Yongqi Fan, Xiaofei Qi, Jinchuan Hou}

\thanks{{\it 2010 Mathematics Subject Classification.} 47A12; 47B49. }
\thanks{{\it Key words and phrases.}
$C^*$-algebra; numerical range; maximal numerical range; preserver problems}

\begin{abstract}

Let $\mathcal{A}$ and $\mathcal{B}$ be unital $C^*$-algebras, and let $V_0(a)=\{f(a): f\in\mathcal S(\mathcal A), f(a^*a)=\|a\|^2\}$ be the algebraic maximal numerical range of  $a\in\mathcal{A}$, where $\mathcal S(\mathcal A)$ is the set of all states of $\mathcal A$.
We study the properties of $V_0(a)$ and characterize surjective maps preserving $V_0$ of
triple products. We show  that if $\Phi\colon\mathcal{A}\to\mathcal{B}$ satisfies
\(
V_0(\Phi(a)\Phi(b)\Phi(c))=V_0(abc) \text{~for all~} a,b,c\in\mathcal{A},
\)
then the map $a\mapsto \Phi(1_{\mathcal{A}})^{-1}\Phi(a)$ is a
multiplicative bijection. Furthermore, for von Neumann algebras
without central summands of type $I_1$ or prime $C^*$-algebras of real rank zero, such preservers are precisely $*$-isomorphisms multiplied by a central element $u\in Z(\mathcal{B})$ with $u^3=1$.

\end{abstract}

\maketitle

\section{Introduction}

Let \(\mathcal{A}\) be a unital \(C^*\)-algebra with unit \(1_{\mathcal{A}}\), and let $\mathcal{S}(\mathcal{A})$ denote its state space
\[
\mathcal{S}(\mathcal{A})=\{f\in \mathcal{A}^*: f(1_\mathcal{A})=\|f\|=1\}.
\]
Here \(\mathcal{A}^*\) denotes
the dual space of $\mathcal A$.
Equivalently, \(\mathcal{S}(\mathcal{A})\) is the set of all positive linear functionals \(f\) on
\(\mathcal{A}\) satisfying \(f(1_\mathcal{A})=1\).
For \(a\in \mathcal{A}\), the algebraic numerical range and numerical radius of
\(a\) are defined respectively by
\[
V(a)=\{f(a): f\in \mathcal{S}(\mathcal{A})\}\text{~and~}v(a)=\sup\{|\lambda|:\lambda\in V(a)\}.
\]
It is well known that \(V(a)\) is a compact convex subset of $\mathbb{C}$ and contains the convex hull of the spectrum of $a$, see \cite{stampfli1968growth}.
Furthermore, the numerical radius \(v(\cdot)\) defines a norm on \(\mathcal{A}\), which is
equivalent to the C$^*$-norm. More precisely,
\[
\frac{1}{2}\|a\|\leq v(a)\leq \|a\| \text{~for all~} a\in\mathcal{A}.
\]

Let $\mathcal{B}(H)$ be the $C^*$-algebra of all bounded linear operators acting on a complex Hilbert space $H$ with inner
product $\langle \cdot, \cdot \rangle$.
Recall that, for $T \in \mathcal{B}(H)$, the classical numerical range and numerical radius are defined by \cite{gau2021numerical}
\[
W(T) = \{ \langle Tx, x \rangle : x \in H, \|x\| = 1 \}\text{~and~}w(T)=\sup\{|\lambda|:\lambda\in W(T)\}.
\]
The Toeplitz-Hausdorff theorem asserts that $W(T)$ is a convex subset of $\mathbb{C}$. Moreover, $\overline{W(T)} = V(T)$, where $\overline{M}$ is the closure of a subset $M$ of $\mathbb{C}$.
The numerical range of an operator is a fundamental concept that has been widely studied in both theoretical and applied settings. In particular, numerical range preserving maps on various operator algebras
have attracted considerable attention; see \cite{li1987linear}, \cite{chan1995numerical} and \cite{cui2003linear}.
The results on preserver problems of numerical range reveal that the numerical range contains enough information so that it has strong rigidity that determine the $*$-isomorphisms between $C^*$-algebras.

For a fixed operator $T\in \mathcal{B}(H)$, the map $\delta _T(X)=TX-XT$ defines an inner derivation induced by $T$.
To determine the norm of an inner derivation on $\mathcal{B}(H)$, the notion of the maximal numerical range was introduced and studied by Stampfli in 1970, see \cite{stampfli1970norm}.
Let $T \in \mathcal{B}(H)$. The maximal numerical range of $T$ is defined by
\[
W_0(T) = \{ \lim_{n\to\infty} \langle Tx_n, x_n \rangle : x_n \in H, \|x_n\| = 1, \lim_{n\to\infty} \|Tx_n\| = \|T\| \}.
\]
If $H$ is finite-dimensional, then $W_0(T)$ coincides with the numerical range generated by the maximal vectors, that is,
\[
W_0(T) = \{ \langle Tx, x \rangle : x \in H, \|x\| = 1, \|Tx\| = \|T\| \}.
\]
Obviously, the set $W_0(T)$ is a nonempty, convex, and compact subset of the closure of the numerical range $W(T)$, that is, $W_0(T)\subseteq \overline{W(T)}$ (see \cite{stampfli1970norm}). Define
\[
w_0(T) = \sup \{ |\lambda|:\lambda \in W_0(T) \}.
\]
Note that $w_0(T)$ is not a norm, since it fails to be positive definite. For example, let
\[ T = \begin{pmatrix} 0 & 1 \\ 0 & 0 \end{pmatrix} \]
Then \( \|T\| = 1 \), \( W_0(T) = \{0\} \), and \( w_0(T) = 0 \), while \( T \neq 0\).

The corresponding notion for a unital C$^*$-algebra is formulated in
terms of maximal states. Let
\[
\mathcal{S}_{\max}(a)=\{f\in \mathcal{S}(\mathcal{A}):f(a^*a)=\|a\|^2\}
\]
and define the algebraic maximal numerical range and maximal numerical radius of \(a\) by
\[
V_0(a)=\{f(a):f\in \mathcal{S}_{\max}(a)\}\text{~and~}v_0(a)=\sup\{|\lambda|:\lambda\in V_0(a)\}.
\]
The set \(V_0(a)\) is a nonempty compact convex subset of \(V(a)\). Moreover, in the special case when \(\mathcal{A}=\mathcal{B}(H)\), this algebraic definition agrees with the maximal
numerical range introduced by Stampfli through vector sequences, that is  $V_0(T)=W_0(T)$.

The numerical range is a fundamental concept. As an important related notion, has become a useful tool connecting numerical range theory with spectral and geometric aspects of functional analysis; see \cite{ji2007essential}, \cite{amara2026maximal}, \cite{runji2017maximal}.
The maximal numerical range has been investigated for quadratic elements, hyponormal operators, operator tensor products, and special matrix classes, respectively, in \cite{benabdi2021maximal}, \cite{baghdad2019maximal}, \cite{taki2025maximal}, \cite{hamed2018maximal}. Its counterpart in the framework of $C^*$-algebras has also been thoroughly studied; see, for example, \cite{benabdi2021maximal}, \cite{benabdi2022birkhoff}.

Preserver problems for maximal numerical ranges have recently attracted considerable attention, see \cite{dhifaoui2024maps}, \cite{bourhim2025multiplicatively} and \cite{bourhim2025maps}. In \cite{dhifaoui2024maps}, K. Dhifaoui and M. Mabrouk characterized the structure of all surjective maps on $\mathcal{B}(H)$ that preserve the maximal numerical range of products $TS$ and skew products $TS^*$ of operators. They showed that there exists a unitary operator $U \in \mathcal{B}(H)$ such that $\Phi(T) = \pm UTU^*$ for all $T \in \mathcal{B}(H)$.
In \cite{bourhim2025multiplicatively}, A. Bourhim and M. Mabrouk characterized all surjective maps $\Phi_1$ and $\Phi_2$ on $\mathcal{B}(H)$ such that, for all $T$, $S\in \mathcal{B}(H)$, the maximal numerical ranges of $TS$ and $\Phi_1(T)\Phi_2(S)$  coincide if and only if there exist an invertible operator $A \in \mathcal{B}(H)$ and a unitary operator $U\in \mathcal{B}(H)$ such that $\Phi_1(T) = UTA$ for all $T \in \mathcal{B}(H)$ and $\Phi_2(T) = A^{-1}TU^*$ for all $T \in \mathcal{B}(H)$.
In \cite{bourhim2025maps}, A. Bourhim and M. Mabrouk obtained a complete characterization of all surjective maps on $\mathcal{B}(H)$ preserving the maximal numerical range of the triple product $TST$. More precisely, such maps $\Phi$ have the form that there is a scalar $\lambda \in \mathbb{C}$ with $\lambda^3 = 1$ and a unitary operator $U \in \mathcal{B}(H)$ such that either $\Phi(T) = \lambda UTU^*$ for all $T \in \mathcal{B}(H)$ or $\Phi(T) = \lambda UT^\top U^*$ for all $T \in \mathcal{B}(H)$. They also characterized maps preserving the maximal numerical range of the skew triple product $T S^*T$. In this case, such a map has the form that there is a unitary operator $U \in \mathcal{B}(H)$ such that either $\Phi(T) = UTU^*$ for all $T \in \mathcal{B}(H)$ or $\Phi(T) = UT^\top U^*$ for all $T \in \mathcal{B}(H)$.

The purpose of this paper is to study further the properties of the algebraic maximal numerical ranges  and the corresponding preserver
problems for the algebraic maximal numerical range on unital \(C^*\)-algebras.
We show that   if a surjective
map \(\Phi:\mathcal A\to\mathcal B\) between unital $C^*$-algebras  preserves the algebraic maximal numerical range of triple product \(abc\), that is,
\[
V_0(\Phi(a)\Phi(b)\Phi(c))=V_0(abc) \text{~for all~} a, b, c \in \mathcal{A}
\]
then $\Phi(1_\mathcal{A})$ is invertible and $\Psi(\cdot)=\Phi(1_\mathcal{A})^{-1}\Phi(\cdot)$ is a multiplicative bijection.
Under the standard assumption that $\mathcal A,\mathcal B$ are  von Neumann algebras without central summands of type $I_1$, or unital $C^*$-algebras of real rank zero, then $\Phi$ preserves the algebraic maximal numerical range of triple product \(abc\) if and only if there exist  a central element $u\in Z(\mathcal{B})$ with $u^3=1$ and a
$*$-isomorphism $\Psi$ such that $\Phi=u\Psi$. Though there is little information contributed from the algebraic maximal numerical range, our results reveal that, it still has certain rigidity to determine the structure of von Neumann algebras and even $C^*$-algebras.

The rest of the paper is organized as follows. In Section 2, we recall some known properties of the algebraic maximal numerical range and provide several new properties.
In Section 3, we draw the multiplicativity
 of surjective maps between unital $C^*$-algebras that preserve the algebraic maximal numerical range of triple products $abc$.
We also show that, for von Neumann
algebras without central summands of type $I_1$ and  unital C$^*$-algebras of real rank zero, the algebraic maximal numerical range of triple product is a rigid invariant for $*$-isomorphisms.

We conclude this section by fixing the notations and recalling several basic facts that will be used throughout the paper. We denote by $\mathbb{C}$ and $\mathbb{R}$ the fields of complex and real numbers, respectively. Let $\mathcal{A}$ be a unital $C^*$-algebra with unit $1_{\mathcal{A}}$. For $a\in\mathcal{A}$, we denote its spectrum and spectral radius by $\sigma(a)$ and $r(a)$, respectively.
We write $\mathcal{A}_+$ for the set of all positive elements in $\mathcal{A}$. $Z(\mathcal{A})$  and $\mathcal{P}(\mathcal{A})$ stand respectively   for the  center of $\mathcal A$ and the set of all projections in $\mathcal{A}$.
If $\mathcal{A}$ is a von Neumann algebra and  $a=a^*\in\mathcal{A}$, we denote by $E_a$ the spectral measure of $a$. For every Borel set $\Delta\subseteq\sigma(a)$, the corresponding spectral projection $E_a(\Delta)$ belongs to $\mathcal{A}$. Consequently, every self-adjoint element of $\mathcal{A}$ can be approximated in norm by self-adjoint elements with finite spectrum. \if false whose spectral projections belong to $\mathcal{A}$.\fi
Also, recall that an element $a\in\mathcal{A}$ is called hyponormal if
\(
a^*a\geq aa^*,
\) see \cite{taki2025maximal} for more details.

\section {Some properties on maximal numerical range}

In this section, we introduce some notation and collect several properties that will be used throughout the paper.

 Let $(X,d)$ be a metric space, and let $A$ and $B$ be nonempty compact
subsets of $X$. Recall that (Ref. \cite{berinde2022pompeiu})
the Hausdorff distance between $A$ and $B$ is defined by
\[
d_H(A,B)
:=
\max\left\{
\sup_{x\in A} d(x,B),
\sup_{y\in B} d(y,A)
\right\},
\]
where,
for $x\in X$ and a nonempty subset $E\subseteq X$,
$
d(x,E):=\inf_{y\in E} d(x,y).
$

It is well known that $d_H$ defines a metric on the family of
nonempty compact subsets of $X$. In particular,
\(
d_H(A,C)\leq d_H(A,B)+d_H(B,C).
\)

 Let $\mathcal{A}$ be a $C^*$-algebra and let $f\in \mathcal{S}(\mathcal{A})$.
Also recall that a net $(a_\alpha)_{\alpha\in\Lambda}\subseteq\mathcal{A}_+$ with
$\|a_\alpha\|=1$ for every $\alpha\in\Lambda$ is said to
excise $f$ \cite[Definition 2.1]{akemann1986excising} if
\[
\lim_{\alpha} \left\| a_\alpha a a_\alpha-f(a)a_\alpha^2 \right\|=0
\text{ for every }a\in\mathcal{A}.
\]

\begin{Proposition}\label{prop2.1} {\rm (\cite[Proposition 2.2]{akemann1986excising})}
Let $\mathcal{A}$ be a $C^*$-algebra. Every pure state
$f\in \mathcal{S}(\mathcal{A})$ is excised by a decreasing net
$(a_\alpha)_{\alpha\in\Lambda}\subseteq\mathcal{A}_+$ such that
\(
f(a_\alpha)=1
\)
for every $\alpha\in\Lambda$.
\end{Proposition}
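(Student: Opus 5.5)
The plan is to prove Proposition \ref{prop2.1} through the GNS representation of the pure state $f$ together with Kadison transitivity, building the excising net from approximate units of the left kernel $L_f=\{x\in\mathcal{A}: f(x^*x)=0\}$. We may assume $\mathcal{A}$ is unital, since a pure state extends uniquely to a pure state of the unitization and an excising net for the extension also excises $f$. Let $(\pi_f,H_f,\xi_f)$ be the GNS triple. Because $f$ is pure, $\pi_f$ is irreducible, and $L_f+L_f^*$ is exactly the kernel of $f$ intersected with the ``off-diagonal'' part. Concretely, for every $a\in\mathcal{A}$ we have $a-f(a)1\in L_f+L_f^*$, which is the standard fact that the hereditary subalgebra $L_f\cap L_f^*$ has codimension one in the hereditary algebra determined by $f$.

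Set $B=L_f\cap L_f^*$, a hereditary $C^*$-subalgebra, and let $(e_\alpha)$ be an increasing approximate unit of $B$ with $0\le e_\alpha\le 1$. Define $a_\alpha=1-e_\alpha$. Then $a_\alpha\ge 0$, the net $(a_\alpha)$ is decreasing, and $f(a_\alpha)=1-f(e_\alpha)=1$ because $e_\alpha\in L_f$ forces $f(e_\alpha)=0$ (by Cauchy--Schwarz and $f(e_\alpha^2)=0$). It also follows that $\|a_\alpha\|=1$, since $f(a_\alpha)=1$ and $a_\alpha\le 1$.

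It remains to verify the excision property. Writing $a=f(a)1+x+y^*$ with $x,y\in L_f$, it suffices to show $\|a_\alpha x a_\alpha\|\to 0$ for $x\in L_f$ (and, by taking adjoints, the same for $y^*$). Here $L_f$ is the closed left ideal corresponding to $B$, that is $L_f=\overline{\mathcal{A}B}$. This gives $\|x(1-e_\alpha)\|\to 0$ for $x\in L_f$, because $x^*x\in B$ and so $\|x(1-e_\alpha)\|^2=\|(1-e_\alpha)x^*x(1-e_\alpha)\|\to 0$. Hence $\|a_\alpha x a_\alpha\|\le\|x a_\alpha\|\to 0$. Then $a_\alpha a a_\alpha - f(a)a_\alpha^2=a_\alpha(x+y^*)a_\alpha\to 0$.

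The main obstacle is the decomposition $\ker f\subseteq L_f+L_f^*$, and more precisely $\mathcal{A}=\mathbb{C}1+L_f+L_f^*$. I would prove it as follows. For $z\in\ker f$, the map $\pi_f(z)$ satisfies $\langle\pi_f(z)\xi_f,\xi_f\rangle=0$. By Kadison transitivity (using irreducibility) choose $u\in\mathcal{A}$ with $\pi_f(u)\xi_f=\pi_f(z)\xi_f$ and $\pi_f(u)^*\xi_f=0$; this is possible because the target vector is orthogonal to $\xi_f$, so a rank-two operator mapping $\xi_f\mapsto\pi_f(z)\xi_f$ and killing $\xi_f$ under the adjoint exists and transitivity realizes it. Then $u^*\in L_f$ and $z-u\in L_f$, so $z\in L_f+L_f^*$. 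This step may need an approximation argument if exact transitivity is delicate, but closedness is not required since the decomposition is exact.
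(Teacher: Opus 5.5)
The paper gives no proof of this statement; it is quoted from Akemann--Anderson--Pedersen, so your argument has to stand on its own. In the unital case it does. That is the only case the paper's definition of $\mathcal S(\mathcal A)$ covers, and the only case Proposition 2.2 uses. There your argument is the standard proof: take $a_\alpha=1-e_\alpha$ for an increasing approximate unit of $B=L_f\cap L_f^*$, use $\ker f=L_f+L_f^*$ for pure $f$, and estimate $\|x(1-e_\alpha)\|^2=\|(1-e_\alpha)x^*x(1-e_\alpha)\|\to 0$ for $x\in L_f$. This needs only $x^*x\in B$, which holds because $L_f$ is a left ideal.

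One step needs to be stated precisely. Ordinary transitivity prescribes $\pi_f(u)$ on finitely many vectors and says nothing about $\pi_f(u)^*\xi_f$, which amounts to the infinite family of conditions $\langle\pi_f(u)\zeta,\xi_f\rangle=0$. Use the self-adjoint form instead. With $\eta=\pi_f(z)\xi_f$, $T\zeta=\langle\zeta,\xi_f\rangle\eta$ and $P$ the projection onto $\mathrm{span}\{\xi_f,\eta\}$, pick self-adjoint $h_1,h_2\in\mathcal A$ with $\pi_f(h_1)P=\frac12(T+T^*)P$ and $\pi_f(h_2)P=\frac1{2i}(T-T^*)P$, and put $u=h_1+ih_2$. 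Then $\pi_f(u)P=TP$ and $\pi_f(u)^*P=T^*P$, so $\pi_f(u)\xi_f=\eta$ and $\pi_f(u)^*\xi_f=\langle\xi_f,\eta\rangle\xi_f=0$. Your fallback ``approximation argument'' would not work: an approximate $u$ gives no exact membership of $z-u$ or $u^*$ in $L_f$, hence no decomposition at all. The remark about ``codimension one in the hereditary algebra determined by $f$'' should simply be dropped.

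The genuine gap, though immaterial for the paper's unital use, is your reduction of the non-unital case. Passing to the unitization $\tilde{\mathcal A}$ produces $1-e_\alpha\in\tilde{\mathcal A}_+$, not elements of $\mathcal A_+$. For example, with $\mathcal A=C_0(\mathbb R)$ and $f=\delta_0$ these functions tend to $1$ at infinity. The statement demands a net in $\mathcal A_+$, so ``an excising net for the extension also excises $f$'' does not give what is claimed.

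The repair is to compress by a fixed element. By transitivity choose $h=h^*\in\mathcal A$ with $\|h\|\le 1$ and $\pi_f(h)\xi_f=\xi_f$, and put $e=h^2$. Then $e\in\mathcal A_+$, $\pi_f(e)\xi_f=\xi_f$ and $\|e\|=f(e)=1$. Now set $a_\alpha=e(1-e_\alpha)e\in\mathcal A_+$, with $(e_\alpha)$ an increasing approximate unit of $L_f\cap L_f^*\subseteq\mathcal A$. This net has the required properties:
\begin{itemize}
\item it is decreasing;
\item $f(a_\alpha)=f(e^2)-f(ee_\alpha e)=1-f(e_\alpha)=1=\|a_\alpha\|$;
\item $a_\alpha xa_\alpha-f(x)a_\alpha^2=e(1-e_\alpha)\bigl(exe-f(x)e^2\bigr)(1-e_\alpha)e$, where $exe-f(x)e^2\in\ker f=L_f+L_f^*$.
\end{itemize}
Your estimate then finishes the proof for arbitrary $\mathcal A$.
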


\begin{Proposition}\label{prop2.2} Let \(\mathcal A\) be a unital \(C^*\)-algebra and let
$f\in \mathcal{S}(\mathcal{A})$ be a pure state.
Then there exists a decreasing net $(e_\alpha)_{\alpha\in\Lambda}\subseteq\mathcal{A}_+$ that excises $f$ and satisfies
\(
f(e_\alpha)=1
\) ($\alpha\in\Lambda$),
such that, for every $a\in\mathcal{A}$,
\[
\lim_{\alpha}
d_H\bigl(V_0(ae_\alpha),\{f(a)\}\bigr)
=0.
\]
Here, \(d_H\) denotes the Hausdorff metric on the nonempty compact subsets
of \(\mathbb C\).
\end{Proposition}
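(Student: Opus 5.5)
The plan is to take for $(e_\alpha)$ exactly the net supplied by Proposition~\ref{prop2.1}: a decreasing net in $\mathcal A_+$ with $\|e_\alpha\|=1$ and $f(e_\alpha)=1$ that excises $f$. Since $0\le e_\alpha\le 1_{\mathcal A}$, we have $e_\alpha^2\le e_\alpha$, and $\|e_\alpha^2\|=\|e_\alpha\|^2=1$. Because $V_0(ae_\alpha)$ is nonempty, the claim $d_H(V_0(ae_\alpha),\{f(a)\})\to0$ is equivalent to
\[
\sup\{|g(ae_\alpha)-f(a)| : g\in\mathcal S_{\max}(ae_\alpha)\}\to 0 .
\]
So the whole point is to estimate $g(ae_\alpha)$ uniformly over maximal states $g$ of $ae_\alpha$.

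\emph{Step 1 (norms).} Apply excision to $a^*a$: $\|e_\alpha a^*a e_\alpha - f(a^*a)e_\alpha^2\|\to0$. Since $\|ae_\alpha\|^2=\|e_\alpha a^*ae_\alpha\|$ and $\|e_\alpha^2\|=1$, this gives $\|ae_\alpha\|^2\to f(a^*a)$.

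\emph{Step 2: the degenerate case $f(a^*a)=0$.} By Cauchy--Schwarz, $f(a)=0$. Moreover $|g(ae_\alpha)|\le\|ae_\alpha\|\to0$ for every state $g$, so the supremum above tends to $0$.

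\emph{Step 3: the main case $f(a^*a)>0$.} Let $g\in\mathcal S_{\max}(ae_\alpha)$, so $g(e_\alpha a^*ae_\alpha)=\|ae_\alpha\|^2$. By excision,
\[
\bigl|\,\|ae_\alpha\|^2 - f(a^*a)\,g(e_\alpha^2)\bigr|\le \varepsilon_\alpha\to0,
\]
where $\varepsilon_\alpha$ does not depend on $g$. Combined with Step 1, this yields $g(e_\alpha^2)\to1$ uniformly in $g$. Since $e_\alpha^2\le e_\alpha\le 1$, it follows that $g(e_\alpha)\to1$. Because $(1-e_\alpha)^2\le 1-e_\alpha$, we get
\[
g\bigl((1-e_\alpha)^2\bigr)\le g(1-e_\alpha)\to0,
\]
again uniformly in $g$.

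Next split
\[
g(ae_\alpha)=g(e_\alpha a e_\alpha)+g\bigl((1-e_\alpha)ae_\alpha\bigr).
\]
The Cauchy--Schwarz inequality for $g$ bounds the second term by $g((1-e_\alpha)^2)^{1/2}\|a\|\to0$. Applying excision to $a$ itself gives
\[
|g(e_\alpha ae_\alpha)-f(a)g(e_\alpha^2)|\le\|e_\alpha ae_\alpha-f(a)e_\alpha^2\|\to0,
\]
and $g(e_\alpha^2)\to1$. Hence $g(ae_\alpha)\to f(a)$ uniformly over $\mathcal S_{\max}(ae_\alpha)$, which proves the proposition.

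\emph{Main obstacle.} The delicate point is Step 3: showing that every maximal state of $ae_\alpha$ must concentrate on $e_\alpha$, i.e.\ $g(e_\alpha^2)\to1$. This is exactly where the maximality condition $g(e_\alpha a^*ae_\alpha)=\|ae_\alpha\|^2$ and the assumption $f(a^*a)>0$ are needed. All bounds must be kept independent of $g$, since the Hausdorff distance requires uniform control over $\mathcal S_{\max}(ae_\alpha)$.
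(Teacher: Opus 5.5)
Your proposal is correct and follows essentially the same route as the paper: the net from Proposition~\ref{prop2.1}, the reduction of $d_H$ to $\sup\{|g(ae_\alpha)-f(a)| : g\in\mathcal S_{\max}(ae_\alpha)\}$, the case split on whether $f(a^*a)=0$, and, in the main case, the uniform estimate $g(e_\alpha^2)\to1$ followed by the splitting $ae_\alpha=e_\alpha ae_\alpha+(1_{\mathcal A}-e_\alpha)ae_\alpha$ and Cauchy--Schwarz. The differences are only cosmetic: you bound the cross term by $\|a\|$ rather than $\|ae_\alpha\|$, and in the degenerate case you use $|g(ae_\alpha)|\le\|ae_\alpha\|$ directly instead of Cauchy--Schwarz.
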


\begin{proof} By Proposition \ref{prop2.1}, there exists a decreasing net
$(e_\alpha)_{\alpha\in\Lambda}\subseteq\mathcal{A}_+$ that excises $f$. Moreover we obtain that each $\|e_\alpha\| = 1$ and
\[
\lim\limits_{\alpha} \|e_\alpha x e_\alpha - f(x) e_\alpha^2\| = 0 \text{~for every~} x \in  \mathcal{A}.
\]
In particular, $0 \le e_\alpha \le 1$ and $f(e_\alpha) = 1$.
By taking $x = a$ and $x = a^*a$ respectively, we obtain
\[
\lim_{\alpha} \|e_\alpha a e_\alpha - f(a) e_\alpha^2\| = 0
\]
and
\[
\lim_{\alpha} \|e_\alpha a^*a e_\alpha - f(a^*a) e_\alpha^2\| = 0.
\]
Since $V_0(ae_\alpha)$ is a nonempty compact subset of $\mathbb{C}$, the Hausdorff distance between $V_0(ae_\alpha)$ and the singleton $\{f(a)\}$ is well defined and is given by
\begin{align*}
d_H(V_0(ae_\alpha), \{f(a)\})
&= \max\left\{\sup_{\lambda_\alpha \in V_0(ae_\alpha)} d(\lambda_\alpha, \{f(a)\}), d(f(a), V_0(ae_\alpha))
\right\} \\
&= \max\left\{\sup_{\lambda_\alpha \in V_0(ae_\alpha)} |\lambda_\alpha - f(a)|, \inf_{\mu_\alpha  \in V_0(ae_\alpha)} |f(a) - \mu_\alpha |  \right\}.
\end{align*}
Observe that
\[
\inf_{\mu_\alpha  \in V_0(ae_\alpha)} |f(a) - \mu_\alpha |
\leq \sup_{\lambda_\alpha \in V_0(ae_\alpha)} |\lambda_\alpha - f(a)|,
\]
from which it follows that
\[
d_H(V_0(ae_\alpha),\{f(a)\})
= \sup_{\lambda_\alpha \in V_0(ae_\alpha)} |\lambda_\alpha - f(a)|.
\]

Thus, proving
\(
\lim_{\alpha}d_H\bigl(V_0(ae_\alpha),\{f(a)\}\bigr)= 0
\)
reduces to checking
\(
\lim_{\alpha}\sup_{\lambda_\alpha \in V_0(ae_\alpha)} |\lambda_\alpha - f(a)| = 0.
\)
Since $f$ is a pure state, we have $f(a^*a) \ge 0$. We proceed by considering two separate cases.

\textbf{Case 1.} $f(a^*a) = 0$.

Suppose that $f(a^*a) = 0$. By Cauchy-Schwarz inequality we obtain $0 \le |f(a)|^2 \le f(1) f(a^*a) = 0$. Hence, $f(a) = 0$.
Also,
\[
\begin{aligned}
 0 \le \|a e_\alpha\|^2 &= \|(a e_\alpha)^*(a e_\alpha)\| = \|e_\alpha a^* a e_\alpha\| \\
&= \left\| e_\alpha a^* a e_\alpha - f(a^*a) e_\alpha^2 + f(a^*a) e_\alpha^2 \right\| \\
&\le \left\| e_\alpha a^* a e_\alpha - f(a^*a) e_\alpha^2 \right\| + \left\| f(a^*a) e_\alpha^2 \right\|.
\end{aligned}
\]
Consequently, $\lim_{\alpha}\|a e_\alpha\| = 0$.
For each \(\alpha\), fix $\alpha$ and $\lambda_\alpha \in V_0(ae_\alpha)$, there exists $\rho_{\alpha,\lambda} \in \mathcal{S}_{\max}(ae_\alpha)$ such that $\lambda_\alpha = \rho_{\alpha,\lambda}(ae_\alpha)$
and
\(
\rho_{\alpha,\lambda}(e_\alpha a^* a e_\alpha) = \|a e_\alpha\|^2.
\)
Applying the Cauchy-Schwarz inequality, we obtain
\[
0\le |\lambda_\alpha|^2 =|\rho_{\alpha,\lambda}(ae_\alpha)|^2 \le \rho_{\alpha,\lambda}(1) \, \rho_{\alpha,\lambda}(e_\alpha a^* a e_\alpha) = \|a e_\alpha\|^2.
\]
This entails that $0 \leq |\lambda_\alpha| \leq \|a e_\alpha\|$ and thus $\sup_{\lambda_\alpha \in V_0(ae_\alpha)}|\lambda_\alpha-f(a)| \leq \|a e_\alpha\|$.
Consequently,
$$0 \le d_H(V_0(a e_\alpha), \{f(a)\})=\sup_{\lambda_\alpha \in V_0(ae_\alpha)}|\lambda_\alpha-f(a)| \leq \|a e_\alpha\|.$$
Taking limits in the above inequality yields
\[
\lim_{\alpha}d_H(V_0(a e_\alpha), \{f(a)\}) = 0.
\]

\textbf{Case 2.} $f(a^*a) > 0$.

Now suppose  $f(a^*a) > 0$.
We first claim that $\lim_{\alpha}\|a e_\alpha\|^2 = f(a^*a)$.
By the assumption in the proposition,
\[
\lim_{\alpha} \|e_\alpha a^*a e_\alpha - f(a^*a) e_\alpha^2\| = 0.
\]
Since $e_\alpha$ is positive and $\|e_\alpha\| = 1$, we have $\|e_\alpha^2\| = \|e_\alpha^* e_\alpha\| = \|e_\alpha\|^2 = 1$. Then
\[
\big| \|e_\alpha a^* a e_\alpha\| - f(a^*a) \big|
= \left| \|e_\alpha a^* a e_\alpha\| - f(a^*a) \|e_\alpha^2\| \right|
\le \| e_\alpha a^* a e_\alpha - f(a^*a) e_\alpha^2 \|.
\]
Hence $\lim_{\alpha}\|a e_\alpha\|^2 = f(a^*a)$.

For each \(\alpha\), pick any $\lambda_\alpha \in V_0(ae_\alpha)$. Then there exists $\rho_{\alpha,\lambda} \in \mathcal{S}_{\max}(a e_\alpha)$ such that $\lambda_\alpha = \rho_{\alpha,\lambda}(a e_\alpha)$ and
\(
\rho_{\alpha,\lambda}(e_\alpha a^* a e_\alpha) = \|a e_\alpha\|^2.
\)
We claim that $\lim_{\alpha}\rho_{\alpha,\lambda}(e_\alpha^2) = 1$. Indeed, since $\rho_{\alpha,\lambda}$ is a state,
\[
\begin{aligned}
& \left| \rho_{\alpha,\lambda}(e_\alpha a^* a e_\alpha) - f(a^*a) \rho_{\alpha,\lambda}(e_\alpha^2) \right|\\
=& \left| \rho_{\alpha,\lambda}\left(e_\alpha a^* a e_\alpha - f(a^*a) e_\alpha^2\right) \right|
\le \left\| e_\alpha a^* a e_\alpha - f(a^*a) e_\alpha^2 \right\|.
\end{aligned}
\]
Thus,
\(
\lim_{\alpha} \big| \|a e_\alpha\|^2  -  f(a^*a) \rho_{\alpha,\lambda}(e_\alpha^2) \big|  = 0.
\)
Since $\lim_{\alpha}\|a e_\alpha\|^2 = f(a^*a)$, we have
\[
f(a^*a)\left|\rho_{\alpha,\lambda}(e_\alpha^2)-1\right|\leq
\left|f(a^*a)\rho_{\alpha,\lambda}(e_\alpha^2)-\|ae_\alpha\|^2\right|+\left|\|ae_\alpha\|^2-f(a^*a)\right|.
\]
It follows that
\(
\lim_{\alpha}f(a^*a)\left|\rho_{\alpha,\lambda}(e_\alpha^2)-1\right| = 0.
\)
As $f(a^*a) > 0$, we get $\lim_{\alpha}\rho_{\alpha,\lambda}(e_\alpha^2) = 1$, as desired.

Next we show that $\lim_{\alpha}|\rho_{\alpha,\lambda}(a e_\alpha) - \rho_{\alpha,\lambda}(e_\alpha a e_\alpha)| = 0$.
Note that
\(
a e_\alpha = e_\alpha a e_\alpha + (a e_\alpha - e_\alpha a e_\alpha) = e_\alpha a e_\alpha + (1_\mathcal{A} - e_\alpha) a e_\alpha.
\)
Then we have
\(
\rho_{\alpha,\lambda}(a e_\alpha) - \rho_{\alpha,\lambda}(e_\alpha a e_\alpha) = \rho_{\alpha,\lambda}\left( (1_\mathcal{A} - e_\alpha) a e_\alpha \right).
\)
By  Cauchy-Schwarz inequality again, one gets
\[
\left| \rho_{\alpha,\lambda}\left( (1_\mathcal{A} - e_\alpha) a e_\alpha \right) \right|^2
\le \rho_{\alpha,\lambda}\left( (1_\mathcal{A} - e_\alpha)^2 \right) \rho_{\alpha,\lambda}\left( e_\alpha a^* a e_\alpha \right).
\]
Since $\rho_{\alpha,\lambda}(e_\alpha a^* a e_\alpha) = \|a e_\alpha\|^2$ is bounded, to prove $|\rho_{\alpha,\lambda} ((1_\mathcal{A} - e_\alpha) a e_\alpha)|$ converges to 0, it suffices to show that $\lim_{\alpha}\rho_{\alpha,\lambda}((1_\mathcal{A} - e_\alpha)^2) = 0$.
Since $0 \le e_\alpha \le 1_\mathcal{A}$, we have $ e_\alpha^2 \le e_\alpha$ and
\(
(1_\mathcal{A} - e_\alpha)^2 \le  1_\mathcal{A} - e_\alpha.
\)
Therefore,
\[
0 \le \rho_{\alpha,\lambda}\left( (1_\mathcal{A} - e_\alpha)^2 \right) \le \rho_{\alpha,\lambda}(1_\mathcal{A} - e_\alpha) = 1 - \rho_{\alpha,\lambda}(e_\alpha) \le 1 - \rho_{\alpha,\lambda}(e_\alpha^2).
\]
Consequently,
\[
\left|
\rho_{\alpha,\lambda}
\bigl((1_{\mathcal A}-e_\alpha)ae_\alpha\bigr)
\right|
\leq
\|ae_\alpha\|
\left(
1-\rho_{\alpha,\lambda}(e_\alpha^2)
\right)^{1/2},
\]
which leads to $\lim_{\alpha} |\rho_{\alpha,\lambda}(a e_\alpha) - \rho_{\alpha,\lambda}(e_\alpha a e_\alpha)| = 0$.

Finally, we claim that $\lim_{\alpha}\rho_{\alpha,\lambda}(e_\alpha a e_\alpha) = f(a)$.
In fact,
\begin{align*}
0\leq \left| \rho_{\alpha,\lambda}(e_\alpha a e_\alpha) - f(a) \right|
&= \left| \rho_{\alpha,\lambda}(e_\alpha a e_\alpha) - f(a) \rho_{\alpha,\lambda}(e_\alpha^2) + f(a) \rho_{\alpha,\lambda}(e_\alpha^2) - f(a) \right| \\
&\le \left| \rho_{\alpha,\lambda}(e_\alpha a e_\alpha) - f(a) \rho_{\alpha,\lambda}(e_\alpha^2) \right| + \left| f(a) \rho_{\alpha,\lambda}(e_\alpha^2) - f(a) \right| \\
&= \left| \rho_{\alpha,\lambda}\big( e_\alpha a e_\alpha - f(a) e_\alpha^2 \big) \right| + \left| f(a) \right| \left| \rho_{\alpha,\lambda}(e_\alpha^2) - 1 \right|\\
&\le \left\| e_\alpha a e_\alpha - f(a) e_\alpha^2 \right\|+ \left| f(a) \right| \left| \rho_{\alpha,\lambda}(e_\alpha^2) - 1 \right|,
\end{align*}
which ensures that
$\lim_{\alpha}\rho_{\alpha,\lambda}(e_\alpha a e_\alpha) = f(a)$.
Then
\begin{align*}
0\leq \left| \rho_{\alpha,\lambda}(a e_\alpha) - f(a) \right|
&= \left| \rho_{\alpha,\lambda}(a e_\alpha) - \rho_{\alpha,\lambda}(e_\alpha a e_\alpha) + \rho_{\alpha,\lambda}(e_\alpha a e_\alpha) - f(a) \right| \\
&\le \left| \rho_{\alpha,\lambda}(a e_\alpha) - \rho_{\alpha,\lambda}(e_\alpha a e_\alpha) \right| + \left| \rho_{\alpha,\lambda}(e_\alpha a e_\alpha) - f(a) \right| \\
&= \left| \rho_{\alpha,\lambda}\left( (1_\mathcal{A} - e_\alpha)a e_\alpha \right) \right|+\left| \rho_{\alpha,\lambda}(e_\alpha a e_\alpha) - f(a) \right|.
\end{align*}
It follows that
$$\sup_{\lambda_\alpha \in V_0(ae_\alpha)} |\lambda_\alpha - f(a)|\le\sup_{\rho_{\alpha,\lambda}\in\mathcal S_{\max} (ae_\alpha)}(\left| \rho_{\alpha,\lambda}\left( (1_\mathcal{A} - e_\alpha)a e_\alpha \right) \right|+\left| \rho_{\alpha,\lambda}(e_\alpha a e_\alpha) - f(a) \right|).$$
Then,
\[
0 \le d_H(V_0(a e_\alpha), \{f(a)\})\le \sup_{\rho_{\alpha,\lambda}\in\mathcal S_{\max} (ae_\alpha)}
(\left| \rho_{\alpha,\lambda}\left( (1_\mathcal{A} - e_\alpha)a e_\alpha \right) \right|+\left| \rho_{\alpha,\lambda}(e_\alpha a e_\alpha) - f(a) \right|).
\]

Put
\[
\eta_\alpha
=
\left\|
e_\alpha a^*ae_\alpha
-
f(a^*a) e_\alpha^2
\right\|,
r_\alpha
=
\left|
\|ae_\alpha\|^2-f(a^*a)
\right|
\]
and
\[
\delta_\alpha
=
\left\|
e_\alpha ae_\alpha
-
f(a)e_\alpha^2
\right\|.
\]
By the assumption on $(e_\alpha)_{\alpha\in\Lambda}$, we have
\[
\eta_\alpha\to 0,
r_\alpha\to 0,
\delta_\alpha\to 0.
\]
For each \(\lambda_\alpha \in V_0(ae_\alpha)\) and corresponding $\rho_{\alpha,\lambda}$,
\[
\begin{aligned}
f(a^*a)\left|
\rho_{\alpha,\lambda}(e_\alpha^2)-1
\right|
&\leq
\left|
f(a^*a)\rho_{\alpha,\lambda}(e_\alpha^2)
-
\|ae_\alpha\|^2
\right|
+
\left|
\|ae_\alpha\|^2-f(a^*a)
\right|  \\
&=
\left|
\rho_{\alpha,\lambda}
\left(
f(a^*a)e_\alpha^2-e_\alpha a^*ae_\alpha
\right)
\right|
+
r_\alpha \\
&\leq
\eta_\alpha+r_\alpha.
\end{aligned}
\]
Hence we get
\[
\left|
\rho_{\alpha,\lambda}(e_\alpha^2)-1
\right|
\leq
\frac{\eta_\alpha+r_\alpha}{f(a^*a)}
\]
and
\[
\begin{aligned}
\left|
\rho_{\alpha,\lambda}((1_{\mathcal A}-e_\alpha)ae_\alpha)
\right|
&\leq
\|ae_\alpha\|
\left(
1-\rho_{\alpha,\lambda}(e_\alpha^2)
\right)^{1/2}
\leq
\|ae_\alpha\|
\left(
\frac{\eta_\alpha+r_\alpha}{f(a^*a)}
\right)^{1/2}.
\end{aligned}
\]

Also note that
\[
\begin{aligned}
\left|
\rho_{\alpha,\lambda}(e_\alpha ae_\alpha)-f(a)
\right|
&\leq
\left|
\rho_{\alpha,\lambda}
\left(
e_\alpha ae_\alpha-f(a)e_\alpha^2
\right)
\right|
+
|f(a)|
\left|
\rho_{\alpha,\lambda}(e_\alpha^2)-1
\right|  \\
&\leq
\delta_\alpha
+
|f(a)|
\frac{\eta_\alpha+r_\alpha}{f(a^*a)}.
\end{aligned}
\]
It follows that
$$\begin{array}{rl}
0 \le d_H(V_0(a e_\alpha), \{f(a)\})\le & \sup_{\rho_{\alpha,\lambda}\in\mathcal S_{\max} (ae_\alpha)}
(\left| \rho_{\alpha,\lambda}\left( (1_\mathcal{A} - e_\alpha)a e_\alpha \right) \right|+\left| \rho_{\alpha,\lambda}(e_\alpha a e_\alpha) - f(a) \right|)\\
\le & \|ae_\alpha\|
\left(
\frac{\eta_\alpha+r_\alpha}{f(a^*a)}
\right)^{1/2}+\delta_\alpha
+
|f(a)|
\frac{\eta_\alpha+r_\alpha}{f(a^*a)}.
\end{array}$$
Consequently, we achieve
\[
\lim_{\alpha}d_H(V_0(a e_\alpha), \{f(a)\}) = 0.
\]

The proof is complete.
\end{proof}

The following result indicates  that  every element in a $C^*$-algebra can be uniquely determined by the algebraic maximal numerical range of its products with other elements.

\begin{Proposition} \label{prop2.3} Let \(\mathcal A\) be a unital \(C^*\)-algebra, and let
\(a,b\in\mathcal A\). If
$
V_0(ax)=V_0(bx)$ holds for all $ x\in\mathcal A$,
 then \(a=b\).
\end{Proposition}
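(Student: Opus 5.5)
We plan to use Proposition \ref{prop2.2} to recover the values of pure states on $a$ and $b$ from the maximal numerical ranges of the products $ae_\alpha$ and $be_\alpha$. Fix a pure state $f\in\mathcal S(\mathcal A)$.

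The key point is that the net $(e_\alpha)$ produced in Proposition \ref{prop2.2} depends only on $f$, not on the element $a$. Indeed, the proof takes it from Proposition \ref{prop2.1}, and the convergence $\lim_\alpha d_H(V_0(xe_\alpha),\{f(x)\})=0$ holds for every $x\in\mathcal A$. So a single decreasing net $(e_\alpha)$ excising $f$ can be used simultaneously for $a$ and for $b$:
\[
\lim_\alpha d_H\bigl(V_0(ae_\alpha),\{f(a)\}\bigr)=0,\qquad \lim_\alpha d_H\bigl(V_0(be_\alpha),\{f(b)\}\bigr)=0 .
\]

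Taking $x=e_\alpha$ in the hypothesis gives $V_0(ae_\alpha)=V_0(be_\alpha)$ for every $\alpha$. The triangle inequality for $d_H$ then yields
\[
|f(a)-f(b)|=d_H(\{f(a)\},\{f(b)\})\le d_H\bigl(\{f(a)\},V_0(ae_\alpha)\bigr)+d_H\bigl(V_0(be_\alpha),\{f(b)\}\bigr)\to 0,
\]
so $f(a)=f(b)$. Hence $f(a-b)=0$ for every pure state $f$.

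To conclude, set $c=a-b$ and write $c=h+ik$ with $h,k$ self-adjoint. Since $f$ is a state, $f(h)$ and $f(k)$ are real, and $f(c)=0$ gives $f(h)=f(k)=0$ for all pure states $f$. For a self-adjoint element $h$, there is a pure state $f$ with $|f(h)|=\|h\|$. This holds because pure states are the extreme points of $\mathcal S(\mathcal A)$, the functional $g\mapsto g(h)$ is weak$^*$ continuous and affine, and the maximum of $|g(h)|$ over all states is $\|h\|$, so by Krein--Milman it is attained at an extreme point. Therefore $h=k=0$ and $a=b$.

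The only step needing care is the uniformity remark: the net in Proposition \ref{prop2.2} must be chosen independently of the element, which its statement ("for every $a\in\mathcal A$") already guarantees. With that in hand, the remaining steps are routine.
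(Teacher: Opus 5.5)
Your proof is correct and follows the paper's argument: one excising net for a fixed pure state $f$ (Proposition \ref{prop2.2}, used simultaneously for $a$ and $b$), the equality $V_0(ae_\alpha)=V_0(be_\alpha)$, and the Hausdorff-distance triangle inequality to get $f(a)=f(b)$ for every pure state, followed by a Krein--Milman argument. Your last step, splitting $a-b=h+ik$ and using a norming pure state for each self-adjoint part, just spells out the separation step that the paper leaves implicit after passing from pure states to all states.
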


\begin{proof} Fix a pure state \(f\) on $\mathcal A$. By Proposition \ref{prop2.1}, there exists a decreasing net \((e_\alpha)_{\alpha\in\Lambda}\subseteq  \mathcal A_+\) such that
\(
\lim_{\alpha} \|e_\alpha x e_\alpha - f(x) e_\alpha^2\| = 0
\text{~for every~} x\in\mathcal A.
\)
Applying Proposition \ref{prop2.2} to \(a, b\in \mathcal A\) gives
\(
\lim_{\alpha}d_H\bigl(V_0(ae_\alpha),\{f(a)\}\bigr)= 0
\)
and
\(
\lim_{\alpha}d_H\bigl(V_0(be_\alpha),\{f(b)\}\bigr)= 0.
\)
Since \(V_0(ax)=V_0(bx)\) for every \(x\in\mathcal A\),
\[
V_0(ae_\alpha)=V_0(be_\alpha),~\alpha \in \Lambda .
\]
By the triangle inequality for the Hausdorff distance, one gets
\[
d_H(\{f(a)\}, \{f(b)\}) \leq d_H(\{f(a)\}, V_0(ae_\alpha)) + d_H(V_0(ae_\alpha), V_0(be_\alpha)) + d_H(V_0(be_\alpha), \{f(b)\}).
\]
The middle term vanishes because $V_0(ae_\alpha) = V_0(be_\alpha)$. Thus,
\[
\begin{aligned}
 d_H(\{f(a)\}, \{f(b)\}) &\le d_H(\{f(a)\}, V_0(ae_\alpha)) + d_H(V_0(be_\alpha), \{f(b)\})\\
 &\le  d_H(V_0(ae_\alpha), \{f(a)\})+d_H(V_0(be_\alpha), \{f(b)\}).
\end{aligned}
\]
Since the distance is nonnegative and the right-hand side tends to $0$,
 we obtain
\[
|f(a) - f(b)|=d_H(\{f(a)\}, \{f(b)\})= 0,
\]
which implies
\(
f(a-b)=0.
\)
Recall that the state space \(\mathcal{S}(\mathcal A)\) is the weak\(^*\)-closed convex hull of its extreme points, and the extreme points of $\mathcal{S}(\mathcal A)$ are precisely the pure states. Since the map $\varphi\mapsto \varphi(a-b)$ is affine and weak*-continuous on $\mathcal{S}(\mathcal A)$,
it follows from the equality on pure states that $\varphi(a-b)=0$ for all $\varphi\in \mathcal{S}(\mathcal A)$. Hence we must have \(a=b\).
\end{proof}

The following lemma is well known.

\begin{Proposition}\label{prop2.4}
Let \(T\in \mathcal{B}(H)\) with \(0 \le T \le \lambda I\) for some \(\lambda \ge 0\) and \(x\in H\) . If \(\|x\| = 1\) and \(\langle Tx, x \rangle = \lambda\), then \(Tx= \lambda x\).
\end{Proposition}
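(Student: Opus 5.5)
The plan is to exploit the equality case of the Cauchy–Schwarz inequality for the positive operator $S=\lambda I-T$. Because $0\le T\le \lambda I$, we have $S\ge 0$, and the hypothesis gives
\[
\langle Sx,x\rangle=\lambda\|x\|^2-\langle Tx,x\rangle=\lambda-\lambda=0 .
\]
The goal is to deduce $Sx=0$, which is exactly the statement $Tx=\lambda x$.

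To pass from $\langle Sx,x\rangle=0$ to $Sx=0$, I will use the positive square root $S^{1/2}$, which exists by the continuous functional calculus since $S\ge 0$. Then
\[
\|S^{1/2}x\|^2=\langle S^{1/2}x,S^{1/2}x\rangle=\langle Sx,x\rangle=0 ,
\]
so $S^{1/2}x=0$. Applying $S^{1/2}$ once more gives $Sx=S^{1/2}S^{1/2}x=0$.

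If one prefers to avoid the square root, there is an alternative. The positive sesquilinear form $[y,z]=\langle Sy,z\rangle$ satisfies the Cauchy–Schwarz inequality $|\langle Sx,y\rangle|^2\le\langle Sx,x\rangle\langle Sy,y\rangle=0$ for every $y\in H$. Taking $y=Sx$ gives $\|Sx\|^2=0$.

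There is no genuine obstacle here. The only point needing care is that positivity of $S$ is required to obtain a semi-inner product, or equivalently a square root, and this is guaranteed precisely by the hypothesis $T\le\lambda I$. The lower bound $T\ge 0$ is not actually needed for the argument. The degenerate case $\lambda=0$ is covered too: it forces $T=0$, and the conclusion holds trivially.
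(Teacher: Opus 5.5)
Your proof is correct and is essentially the paper's argument: the paper likewise sets $P=\lambda I-T\ge 0$, notes $\langle Px,x\rangle=0$, rewrites this as $\|P^{1/2}x\|^2=0$, and concludes $Px=P^{1/2}(P^{1/2}x)=0$. Your Cauchy--Schwarz alternative is also valid, and you are right that the hypothesis $T\ge 0$ is not needed.
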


\begin{proof}
Since \(0 \le T \le \lambda I\), the operator \(P = \lambda I - T\ge 0\). Hence,
\(
\langle Px, x \rangle \ge 0
\)
for every vector \(x\in H\).
On the other hand,
\(
\langle Px, x \rangle
= \langle (\lambda I - T)x, x \rangle
= \lambda \langle x, x \rangle - \langle Tx, x \rangle=0.
\)
Since \(P \ge 0\), it admits a unique positive square root \(P^{1/2}\) such that
\[
\langle Px, x \rangle
= \langle P^{1/2} P^{1/2} x, x \rangle
= \langle P^{1/2} x, P^{1/2} x \rangle
= \left\| P^{1/2} x \right\|^2.
\]
It implies \(P^{1/2} x = 0\). Multiplying both sides by \(P^{1/2}\) gives
\(
Px = P^{1/2} \left( P^{1/2} x \right) = 0.
\)
Therefore \(Tx = \lambda x\). This completes the proof.
\end{proof}

We do not know how to use the algebraic maximal numerical range to characterize all self-adjoint elements in $C^*$-algebras. But we have the following result, which is very useful in the next section.

\begin{Proposition}\label{prop2.5}
Let \(\mathcal{A}\) be a unital \(C^*\)-algebra and \(a\in \mathcal{A}\) satisfies \( a^2=1_{\mathcal A}. \) If \( V_0(a)\neq \{0\} \), then \( a=a^*.\)
\end{Proposition}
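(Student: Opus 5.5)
The plan is a purely algebraic argument by contraposition, working directly with maximal states; no representation on a Hilbert space is needed. I will show that if $a^2=1_{\mathcal A}$ and $a\neq a^*$, then $V_0(a)=\{0\}$. Set $T=a^*a\ge 0$ and $M=\|a\|^2=\|T\|$.

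\textbf{Step 1 (if $a\ne a^*$ then $M>1$).} Since $a^2=1_{\mathcal A}$, $a$ is invertible with $a^{-1}=a$. Hence $1\le\|a\|\,\|a^{-1}\|=\|a\|^2$, so $M\ge 1$. Moreover
\[
T^{-1}=a^{-1}(a^*)^{-1}=a a^*.
\]
Suppose $M=1$. Then $T\le 1_{\mathcal A}$ because $\|T\|=1$, and $aa^*\le \|aa^*\|\,1_{\mathcal A}=\|a\|^2 1_{\mathcal A}=1_{\mathcal A}$. So $T^{-1}=aa^*\le 1_{\mathcal A}$ as well. By functional calculus on the positive invertible element $T$, the conditions $T\le 1$ and $T^{-1}\le 1$ force $\sigma(T)=\{1\}$, so $T=1_{\mathcal A}$. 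Then $a^*=a^*(a a)=(a^*a)a=a$, contradicting $a\neq a^*$. Therefore $M>1$.

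\textbf{Step 2 (absorption property of maximal states).} Let $f\in\mathcal S_{\max}(a)$, so $f(M1_{\mathcal A}-T)=0$ with $M1_{\mathcal A}-T\ge 0$. By the Cauchy--Schwarz inequality for the positive functional $f$,
\[
|f(y(M-T))|^2\le f(yy^*)\,f(M-T)=0,
\]
so $f(yT)=Mf(y)$ for every $y\in\mathcal A$. Taking adjoints gives $f(Ty)=Mf(y)$ for every $y\in\mathcal A$.

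\textbf{Step 3 (the key identity and conclusion).} Using $a^2=1_{\mathcal A}$ again, $aTa=aa^*aa=aa^*=T^{-1}$, hence $aT=T^{-1}a$. Now apply Step 2 twice:
\[
Mf(a)=f(aT)=f(T^{-1}a),
\qquad
f(a)=f\bigl(T\,(T^{-1}a)\bigr)=Mf(T^{-1}a).
\]
Combining these gives $f(a)=M^2f(a)$. Since $M>1$ by Step 1, $f(a)=0$. As $f\in\mathcal S_{\max}(a)$ was arbitrary, $V_0(a)=\{0\}$, which is the contrapositive of the statement.

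The main obstacle is spotting the identity $aTa=T^{-1}$ in Step 3, which lets the two absorption rules from Step 2 be chained. The only other delicate point is the step $T\le 1,\ T^{-1}\le 1\Rightarrow T=1$ in Step 1, which is routine functional calculus.
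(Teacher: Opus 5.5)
Your proof is correct. Step 1 is the paper's first case: $h=a^*a$ and $h^{-1}=aa^*$ both have norm one, which forces $h=1_{\mathcal A}$. For $\|a\|>1$ you take a different route.

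\textbf{Comparison with the paper.} The paper passes to the GNS triple $(\pi_f,H_f,e)$ of a maximal state $f$. It uses Proposition~\ref{prop2.4} to see that $e$ is an eigenvector of $\pi_f(a^*a)$ for the eigenvalue $\|a\|^2$, and then computes $f(a)=\langle \pi_f(a)e,\pi_f(a)^2e\rangle=\|a\|^2\,\overline{f(a)}$. Your absorption rule $f(yT)=f(Ty)=Mf(y)$ is the algebraic form of that eigenvector statement, since $f(yT)=\langle \pi_f(T)e,\pi_f(y^*)e\rangle$. You get it straight from Cauchy--Schwarz, and the identity $aTa=T^{-1}$ lets you apply it twice to reach $f(a)=M^2f(a)$. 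This keeps everything inside $\mathcal A$ and makes the GNS construction and Proposition~\ref{prop2.4} unnecessary. The paper's version is the familiar Hilbert-space argument from the $\mathcal B(H)$ setting. Your rule also gives the paper's identity in one line, because $a^*T=(a^2)^*a=a$, so $f(a)=f(a^*T)=Mf(a^*)=M\overline{f(a)}$.

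\textbf{A small correction in Step 2.} The displayed inequality $|f(y(M-T))|^2\le f(yy^*)\,f(M-T)$ is not the Cauchy--Schwarz inequality. As a general statement $|f(yx)|^2\le f(yy^*)f(x)$ for $x\ge 0$, it is false: take $y=1_{\mathcal A}$ and $x=2\cdot 1_{\mathcal A}$. Either of the following repairs works:
\begin{enumerate}
\item Write $y(M-T)=\bigl(y(M-T)^{1/2}\bigr)(M-T)^{1/2}$, which gives $|f(y(M-T))|^2\le f\bigl(y(M-T)y^*\bigr)\,f(M-T)=0$.
\item Use $|f(y(M-T))|^2\le f(yy^*)\,f((M-T)^2)$ together with $(M-T)^2\le\|M-T\|(M-T)$.
\end{enumerate}
The conclusion $f(yT)=Mf(y)$, and everything after it, is unaffected.
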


\begin{proof}
Since \(a^2=1_{\mathcal A}\), we have \(a^{-1}=a.\)
Hence \( 1=\|a^{2}\|\leq \|a\|^2 \). Thus \( \|a\|\geq 1. \) In what follows, we discuss two distinct cases.

We first consider the case \(\|a\|=1\). Put \(h=a^*a\). Then \(h\geq0\) and
\(\|h\|=\|a^*a\|=\|a\|^2=1\). Moreover,
\(
h^{-1}=(a^*a)^{-1}=a^{-1}(a^*)^{-1}=aa^*,
\)
and therefore \(\|h^{-1}\|=\|aa^*\|=\|a\|^2=1\). Since \(h\geq0\) and
\(\|h\|=1\), we have \(\sigma(h)\subseteq[0,1]\). Similarly,
\(h^{-1}\geq0\) and \(\|h^{-1}\|=1\) imply
\(\sigma(h^{-1})\subseteq[0,1]\).
Since
\(
\sigma(h^{-1})=\{\lambda^{-1}:\lambda\in\sigma(h)\},
\)
it follows that \(\sigma(h)\subseteq[1,\infty)\). Hence
\(
\sigma(h)=\{1\}.
\)
As \(h\) is positive, \(h=1_{\mathcal A}\).
Thus \(a^*a=1_{\mathcal A}\), and consequently \(a=a^*\).

It remains to exclude the case \(\|a\|>1\). For any $f\in \mathcal S_{\max} (a)$, let
\((\pi_f, H_f, e)\) be the GNS representation of $\mathcal A$ associated with \(f\), where
\(\|e\|=1\) and
\[
f(x)=\langle \pi_f(x)e,e\rangle\text{~for all~} x \in \mathcal{A}.
\]
The representation is a $*$-homomorphism that satisfies \(\pi_f(1_\mathcal{A}) = I_{H_f}\).
Since \(\pi_f\) is contractive, we have
\[
0 \le \pi_f(a)^* \pi_f(a) = \pi_f(a^*a) \le \left\| \pi_f(a^*a) \right\| I_{H_f} \le  \|a\|^2 I_{H_f}.
\]
Moreover,
\[
\langle \pi_f(a)^* \pi_f(a) \, e, e \rangle
= \langle \pi_f(a^*a) e, e \rangle
= f(a^*a) = \|a\|^2.
\]
It follows from Proposition \ref{prop2.4} that
\[
\pi_f(a)^* \pi_f(a) e = \|a\|^2  e.
\]
As \(\pi_f(a^2)=\pi_f(1_\mathcal{A}) = I_{H_f}\), we also have \(\pi_f(a)^2= I_{H_f}\) and \(\pi_f(a)^2e=e\). Hence
\[
\begin{aligned}
f(a) = \langle \pi_f(a) e, e \rangle
&= \langle \pi_f(a) e, \pi_f(a)^2 e \rangle\\
&= \langle \pi_f(a)^* \pi_f(a) e, \pi_f(a) e \rangle \\
&= \|a\|^2 \langle e, \pi_f(a) e \rangle
= \|a\|^2 \overline{\langle \pi_f(a) e, e \rangle}
= \|a\|^2 \overline{f(a)}.
\end{aligned}
\]
Since \(\|a\| > 1\), we have \(1 - \|a\|^2 \neq 0\), which forces \(f(a) = 0\).
Thus every \(f\in \mathcal{S}_{\max}(a)\) satisfies \(f(a)=0\), and hence
\(V_0(a)=\{0\}\). This contradicts the hypothesis \(V_0(a)\neq \{0\}\).
Therefore the case \(\|a\|>1\) is impossible.

So we must have \(\|a\| = 1\) and \(a = a^*\).
\end{proof}

\begin{Proposition}\label{prop2.6}
Let $\mathcal{A}$ be a unital $C^*$-algebra, and let
$a\in\mathcal{A}$ be hyponormal. Then
\(
V_0(a) = co\bigl(\sigma_n(a)\bigr),
\)
where $\sigma_n(a) := \{\lambda \in \sigma(a) : |\lambda| = \|a\|\}.$ In particular, for positive elements $a\in\mathcal{A}$, one has \(V_0(a)=\{\|a\|\}\).
\end{Proposition}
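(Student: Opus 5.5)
Throughout, $a$ is hyponormal and $M=\|a\|$. I would prove the two inclusions $V_0(a)\subseteq co(\sigma_n(a))$ and $\sigma_n(a)\subseteq V_0(a)$ separately. The case $a=0$ is trivial, so assume $M>0$. The key preliminary fact is that a hyponormal element is normaloid, i.e.\ $r(a)=\|a\|$; this follows as in the Hilbert space case by representing $\mathcal A$ faithfully on $H$, where $\pi(a)$ is a hyponormal operator and hence satisfies $r=\|\cdot\|$. Consequently $\sigma_n(a)\neq\emptyset$.

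\textbf{First inclusion.} Take $f\in\mathcal S_{\max}(a)$ with GNS triple $(\pi_f,H_f,e)$ and put $T=\pi_f(a)$, which is hyponormal. Exactly as in the proof of Proposition \ref{prop2.5}, since $0\le T^*T\le M^2I$ and $\langle T^*Te,e\rangle=M^2$, Proposition \ref{prop2.4} gives $T^*Te=M^2e$, so $\|Te\|=M=\|T\|$. Hyponormality yields $0\le TT^*\le T^*T\le M^2 I$, and hence $\|T^*e\|^2=\langle TT^*e,e\rangle\le M^2$.

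I would then show that $Te$ is an eigenvector of the hyponormal operator $T$ and use this to deduce that $f(a)\in co(\sigma_n(a))$. A cleaner route is via a spectral/convexity argument: consider the closed subspace $K=\ker(T^*T-M^2)$, which contains $e$. For a hyponormal operator, the maximal space $K$ is reducing for $T$, and $T|_K$ is $M$ times an isometry that is hyponormal. Being also co-hyponormal-bounded, $T|_K$ is $M$ times a unitary, hence normal with spectrum in $\{|\lambda|=M\}\cap\sigma(T)\subseteq\sigma_n(a)$. Then $f(a)=\langle T|_K e,e\rangle$ lies in $co(\sigma(T|_K))\subseteq co(\sigma_n(a))$, by the numerical range of normal operators.

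\textbf{Main obstacle.} The hard step is proving that $K$ reduces $T$ and that $T|_K$ is normal. For $x\in K$ I plan to argue as follows. From $\|TT^*\|\le M^2$ and $TT^*\le T^*T$ we get $\langle (T^*T-TT^*)x,x\rangle=M^2-\|T^*x\|^2\ge0$. Next, applying Proposition \ref{prop2.4} to $T^*T$ at $x$ gives $T^*Tx=M^2x$. Then $\|T^*(Tx)\|=M^2\|x\|=M\|Tx\|$, so $\|T^*y\|=M\|y\|$ with $y=Tx$. Since $\|Ty\|\ge\|T^*y\|=M\|y\|$, we get $y\in K$, i.e.\ $TK\subseteq K$. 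Also $T^*x=M^2T^{-1}$ on $K$, because $T^*(Tx)=M^2x$, and this should give $T^*K\subseteq K$ once one shows that $T|_K$ is surjective onto $K$. I expect that surjectivity, or an alternative argument via approximate eigenvectors (Stampfli's $W_0$ result for hyponormal operators), is the most delicate point. If it resists, I would instead quote the Hilbert space theorem $W_0(T)=co(\sigma_n(T))$ for hyponormal $T$ \cite{taki2025maximal}, applied in the universal representation, where $V_0=W_0$.

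\textbf{Reverse inclusion and positive case.} For $\lambda\in\sigma_n(a)$, $\lambda$ lies on the boundary of the spectrum. Hence there is a pure state $f$ with $f(a)=\lambda$ and $f((a-\lambda)^*(a-\lambda))=0$, obtained from approximate eigenvectors in the universal representation. Expanding this gives $f(a^*a)=|\lambda|^2=M^2$, so $\lambda\in V_0(a)$. Since $V_0(a)$ is convex, the inclusion $co(\sigma_n(a))\subseteq V_0(a)$ follows. For $a\ge0$, $a$ is normal and $\sigma_n(a)=\{\|a\|\}$, which gives $V_0(a)=\{\|a\|\}$.
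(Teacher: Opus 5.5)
\textbf{There is a genuine gap in your self-contained route for $V_0(a)\subseteq co(\sigma_n(a))$.} Your reverse inclusion and the positive case are fine. The reverse inclusion does not even use hyponormality, and an ordinary state suffices there, not a pure one.

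The problem is that the structural claims behind the ``cleaner route'' are false. First, $K=\ker(T^*T-M^2)$ need not reduce $T$. Take the hyponormal weighted shift $Te_n=w_ne_{n+1}$ on $\ell^2(\mathbb{N}_0)$ with $w_0=\tfrac12$ and $w_n=1$ for $n\ge1$. It arises as $\pi_f(a)$ for $\mathcal A=\mathcal B(\ell^2)$, $a=T$, and $f$ the vector state at $e_1$, which lies in $\mathcal S_{\max}(a)$. Here $K=\overline{\mathrm{span}}\{e_n:n\ge1\}$, yet $T^*e_1=\tfrac12 e_0\notin K$. Second, $T|_K$ need not be a multiple of a unitary. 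For the unilateral shift $S$ one has $K=H$ and $T|_K=S$. This operator is not surjective. It has no eigenvectors, so your first idea that $Te$ is an eigenvector also fails. Its spectrum is $\overline{\mathbb D}\not\subseteq\sigma_n(S)=\mathbb T$. So the surjectivity you flag as delicate is simply unavailable, and $\sigma(T|_K)\subseteq\sigma_n(a)$ is wrong in general.

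\textbf{How to repair it.} What you did prove, namely $TK\subseteq K$, already suffices. Then $T|_K$ is $M$ times an isometry on a space containing $e$. Any approximate eigenvalue of $T|_K$ has modulus $M$, and it is also an approximate eigenvalue of $T=\pi_f(a)$, hence lies in $\sigma(a)$. Boundary points of $\sigma(T|_K)$ are approximate eigenvalues of $T|_K$, so every such point lies in $\sigma_n(a)$. There are two cases.
\begin{itemize}
\item If $T|_K$ is surjective, it is $M$ times a unitary. Then every point of $\sigma(T|_K)\subseteq M\mathbb T$ is a boundary point, so $\sigma(T|_K)\subseteq\sigma_n(a)$. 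Normality gives $f(a)=\langle T|_Ke,e\rangle\in co(\sigma(T|_K))\subseteq co(\sigma_n(a))$.
\item If $T|_K$ is not surjective, then $\sigma(T|_K)=M\overline{\mathbb D}$. Hence $M\mathbb T\subseteq\sigma(a)$ and $co(\sigma_n(a))=M\overline{\mathbb D}$, which contains $f(a)$ because $|f(a)|\le M$.
\end{itemize}

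\textbf{Comparison with the paper.} The paper does none of this. It quotes \cite[Proposition 2.3]{benabdi2021maximal} for hyponormal elements of a $C^*$-algebra and then notes $\sigma_n(a)=\{\|a\|\}$ for $a\ge0$. Your fallback, quoting the Hilbert space theorem in a faithful representation, is essentially the same citation-based approach. It does, however, need two remarks. First, $V_0$ is unchanged under a faithful unital representation $\pi$, because states on $\pi(\mathcal A)$ extend to $\mathcal B(H)$. Second, $\sigma(\pi(a))=\sigma(a)$ and $\|\pi(a)\|=\|a\|$.
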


\begin{proof}
For hyponormal elements, the conclusion cames from  \cite[Proposition 2.3]{benabdi2021maximal}. Assume that $a\in\mathcal A$ is a positive element. Then
\(
\sigma(a)\subseteq[0,\|a\|]
\) and
\(
\sigma_n(a)
=
\left\{
\lambda\in\sigma(a):
|\lambda|=\|a\|
\right\}
=
\{\|a\|\}.
\)
Consequently, by applying the conclusion for hyponormal,
we get \(V_0(a)=\{\|a\|\}\), as desired.
\end{proof}

\begin{Proposition}\label{prop2.7}
Let $\mathcal{A}$ be a unital \(C^*\)-algebra. If \(\{a_n\}_{n=1}^\infty\) is a sequence in $\mathcal{A}$ such that \(\|a_n-a\|\to 0\)  for some $a\in\mathcal A$ and  \(\{\lambda_n\}_{n=1}^\infty \) is a sequence in \(\mathbb{C}\) such that \(\lambda_n\in V_0(a_n)\)  for every \(n\) and \(\lambda_n\to\lambda\) for some $\lambda\in\mathbb{C}$, then \(\lambda\in V_0(a)\).
\end{Proposition}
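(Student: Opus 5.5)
We will prove Proposition \ref{prop2.7} by a weak$^*$-compactness argument on the state space. For each $n$, the hypothesis $\lambda_n\in V_0(a_n)$ gives a state $f_n\in\mathcal S_{\max}(a_n)$ with
\[
f_n(a_n)=\lambda_n \quad\text{and}\quad f_n(a_n^*a_n)=\|a_n\|^2 .
\]
By the Banach--Alaoglu theorem, $\mathcal S(\mathcal A)$ is weak$^*$-compact: it is the intersection of the unit ball of $\mathcal A^*$ with the weak$^*$-closed conditions $f(1_{\mathcal A})=1$ and positivity. Hence the sequence $(f_n)$ has a subnet $(f_{n_\beta})_\beta$ converging weak$^*$ to some $f\in\mathcal S(\mathcal A)$. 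We only need a subnet, not a subsequence, since $\mathcal A$ need not be separable. The goal is to show $f\in\mathcal S_{\max}(a)$ and $f(a)=\lambda$.

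The key step is to pass to the limit in the two identities, which requires controlling a moving functional evaluated at a moving element. Since every state has norm one,
\[
|f_n(a_n)-f(a)|\le |f_n(a_n)-f_n(a)|+|f_n(a)-f(a)|\le \|a_n-a\|+|f_n(a)-f(a)|.
\]
Along the subnet, the first term tends to $0$ because $\|a_n-a\|\to0$; this uses that the $n_\beta$ are eventually large, which is part of the definition of a subnet. The second term tends to $0$ by weak$^*$ convergence. Therefore
\[
f(a)=\lim_\beta f_{n_\beta}(a_{n_\beta})=\lim_\beta \lambda_{n_\beta}=\lambda .
\]

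For the maximality condition we argue in the same way. Note that
\[
\|a_n^*a_n-a^*a\|\le \|a_n^*\|\,\|a_n-a\|+\|a_n^*-a^*\|\,\|a\|\to0,
\]
because $\|a_n\|$ is bounded. Consequently
\[
f(a^*a)=\lim_\beta f_{n_\beta}(a_{n_\beta}^*a_{n_\beta})=\lim_\beta \|a_{n_\beta}\|^2=\|a\|^2,
\]
where the last equality uses continuity of the norm. Thus $f\in\mathcal S_{\max}(a)$ and $\lambda=f(a)\in V_0(a)$.

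The only delicate point is the combination of a net limit with a sequence index. To handle it, we take the subnet map $\beta\mapsto n_\beta$ to be cofinal, which guarantees that $\|a_{n_\beta}-a\|\to0$ and $\lambda_{n_\beta}\to\lambda$ along the subnet. Everything else is the routine estimate above.
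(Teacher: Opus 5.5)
Your proof is correct and follows the paper's argument essentially verbatim: weak$^*$-compactness of $\mathcal S(\mathcal A)$, followed by the same two triangle-inequality estimates for $f(a)$ and $f(a^*a)$. The one difference favors you. The paper extracts a weak$^*$-convergent \emph{subsequence}, which is not justified for non-separable $\mathcal A$; for example, the point evaluations $\delta_n$ on $\ell^\infty$ have no weak$^*$-convergent subsequence. Your subnet argument, by contrast, is valid in general.
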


\begin{proof}
For each $n\geq1$, since \(\lambda_n\in V_0(a_n)\), there exists \(f_n\in \mathcal{S}(\mathcal A)\) such that
\[
f_n(a_n) = \lambda_n ~\text{and}~ f_n(a_n^* a_n) = \|a_n\|^2.
\]
Note that the state space \(\mathcal{S}(\mathcal A)\) is weak $^*$-compact.
Since  $\{f_n\}$ is a sequence in $\mathcal{S}(\mathcal{A})$, it admits a weak $^*$-convergent subsequence $\{f_{n_k}\}$. Hence,
\(
f_{n_k}
\xrightarrow{wk^*}
f
\)
for some $f\in\mathcal S(\mathcal A)$. It follows that
\(
\|a_{n_k}-a\|\to 0,  \lambda_{n_k} \to \lambda,
\)
and
\[
f_{n_k}(a_{n_k}) = \lambda_{n_k}, ~ f_{n_k}(a_{n_k}^* a_{n_k}) = \|a_{n_k}\|^2.
\]
\if false We now prove that $f(a) = \lambda$, $f \in \mathcal{S}_{\mathrm{max}}(a)$ and thus $\lambda \in V_0(a)$.\fi

As
\[
\begin{aligned}
\bigl| f_{n_k}(a_{n_k}) - f(a) \bigr|
&\le \bigl| f_{n_k}(a_{n_k}) - f_{n_k}(a) \bigr| + \bigl| f_{n_k}(a) - f(a) \bigr|\\
&\le \|a_{n_k} - a\|+\bigl| f_{n_k}(a) - f(a) \bigr|.
\end{aligned},
\]
 $\|a_{n_k} - a\| \to 0$  and
\(
f_{n_k}
\xrightarrow{wk^*}
f
\), we have
\[
f_{n_k}(a_{n_k}) \to f(a).
\]
On the other hand, $f_{n_k}(a_{n_k}) = \lambda_{n_k} \to \lambda$. Hence,
\(
f(a) = \lambda.
\)

Next we check that $f \in \mathcal{S}_{\mathrm{max}}(a)$.

Clearly, if \(\|a_n-a\|\to 0\) and \(\|b_n-b\|\to 0\), then
 \(\|a_n^*-a^*\|\to 0\),
 $\|a_nb_n - ab\|\to 0$ and
 $\|a_n\| \to \|a\|$.
Then
\[
\begin{aligned}
\bigl| f_{n_k}(a_{n_k}^* a_{n_k}) - f(a^* a) \bigr|
&\le \bigl| f_{n_k}(a_{n_k}^* a_{n_k}) - f_{n_k}(a^* a) \bigr| + \bigl| f_{n_k}(a^* a) - f(a^* a) \bigr|\\
&\le\| a_{n_k}^* a_{n_k} - a^* a \|+ \bigl| f_{n_k}(a^* a) - f(a^* a) \bigr|.
\end{aligned}
\]
Since $\| a_{n_k}^* a_{n_k} - a^* a\| \to0$ and
\(
f_{n_k}
\xrightarrow{wk^*}
f
\), it follows that
\[
\|a_{n_k}\|^2=f_{n_k}(a_{n_k}^* a_{n_k}) \to f(a^* a).
\]
But $\|a_{n_k}\|^2 \to \|a\|^2$, which entails
\(
f(a^* a) = \|a\|^2.
\)
Therefore, one has $f \in \mathcal{S}_{\mathrm{max}}(a)$.

Now $\lambda = f(a)$ and $f \in \mathcal{S}_{\mathrm{max}}(a)$ imply $\lambda \in V_0(a)$. This completes the proof.
\end{proof}

\section {Preservers of ordered Triple products}

In this section, we discuss the general preserver problem of algebraic maximal numerical range on $C^*$-algebras.
We first  investigate surjective maps between unital $C^*$-algebras that preserve the algebraic maximal numerical range of triple products of the form \(abc\).

The following result is basic.

\begin{Theorem}\label{thm3.1}
Let \(\mathcal A\) and \(\mathcal B\) be unital \(C^*\)-algebras,
and let \(\Phi:\mathcal {A} \to \mathcal {B}\) be a surjective map such that
\[
V_0(\Phi(a)\Phi(b)\Phi(c))=V_0(abc) \text{~for all~} a, b, c \in \mathcal{A}.
\]
Then \(\Phi\) is injective, \(\Phi(1_\mathcal{A})\in Z(\mathcal B)\) is   invertible and  the map
\(
\Psi:\mathcal A\to\mathcal B
\)
defined by
\[
\Psi(a)=\Phi(1_{\mathcal A})^{-1}\Phi(a)\text{~for all~} a\in \mathcal{A}
\]
is  a  multiplicative bijection.
\end{Theorem}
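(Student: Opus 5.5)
The plan is to apply Proposition \ref{prop2.3} in $\mathcal B$, keeping the third slot of the triple product free and using surjectivity of $\Phi$ to let $\Phi(c)$ run over all of $\mathcal B$. Write $u=\Phi(1_{\mathcal A})$ throughout.

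\textbf{Injectivity.} Suppose $\Phi(a)=\Phi(a')$. For every $x\in\mathcal A$, taking $b=1_{\mathcal A}$ and $c=x$ gives
\[
V_0(ax)=V_0(\Phi(a)u\Phi(x))=V_0(\Phi(a')u\Phi(x))=V_0(a'x).
\]
Proposition \ref{prop2.3}, applied in $\mathcal A$, then yields $a=a'$.

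\textbf{The two product identities.} Fix $a,b\in\mathcal A$. For every $c\in\mathcal A$ we have
\[
V_0(\Phi(ab)\,u\,\Phi(c))=V_0(ab\cdot 1_{\mathcal A}\cdot c)=V_0(abc)=V_0(\Phi(a)\Phi(b)\Phi(c)).
\]
Since $\Phi$ is surjective, $\Phi(c)$ ranges over all $y\in\mathcal B$. So $V_0(\Phi(ab)u\,y)=V_0(\Phi(a)\Phi(b)\,y)$ for all $y\in\mathcal B$, and Proposition \ref{prop2.3} in $\mathcal B$ gives $\Phi(ab)u=\Phi(a)\Phi(b)$.

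In the same way, writing $abc=1_{\mathcal A}\cdot ab\cdot c$ gives $V_0(u\,\Phi(ab)\,y)=V_0(\Phi(a)\Phi(b)\,y)$ for all $y$. Hence
\[
\Phi(ab)u=u\Phi(ab)=\Phi(a)\Phi(b)\quad\text{for all } a,b\in\mathcal A.
\]

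\textbf{Centrality and invertibility of $u$.} Taking $b=1_{\mathcal A}$ gives $\Phi(a)u=u\Phi(a)$ for every $a$. Since $\Phi$ is onto, $u$ commutes with all of $\mathcal B$, so $u\in Z(\mathcal B)$.

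For invertibility, use surjectivity to choose $a_0\in\mathcal A$ with $\Phi(a_0)=1_{\mathcal B}$. Then
\[
\Phi(a_0^2)u=u\Phi(a_0^2)=\Phi(a_0)\Phi(a_0)=1_{\mathcal B},
\]
so $u$ is invertible with inverse $\Phi(a_0^2)$.

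\textbf{Properties of $\Psi$.} Set $\Psi(a)=u^{-1}\Phi(a)$. Using centrality and the identity $\Phi(ab)u=\Phi(a)\Phi(b)$,
\[
\Psi(ab)=u^{-1}\Phi(ab)=u^{-2}\Phi(a)\Phi(b)=u^{-1}\Phi(a)\,u^{-1}\Phi(b)=\Psi(a)\Psi(b).
\]
$\Psi$ is injective because $\Phi$ is injective and $u$ is invertible. It is surjective because any $y\in\mathcal B$ equals $\Psi(a)$ where $\Phi(a)=uy$.

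I do not expect a serious obstacle. The only point needing care is to keep the third factor free, so that surjectivity turns the preserver identity into exactly the hypothesis of Proposition \ref{prop2.3}. That proposition only handles a variable factor on the right, so both product identities must be arranged with the free element as the last factor, as done above.
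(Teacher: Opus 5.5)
Your proposal is correct and follows the paper's proof essentially step by step: injectivity, centrality of $u$, and the identity $\Phi(a)\Phi(b)=\Phi(ab)u=u\Phi(ab)$ all come from Proposition~\ref{prop2.3}, with the free factor placed last and surjectivity of $\Phi$ used to let it range over $\mathcal B$. The differences are cosmetic: you get centrality from your two product identities with $b=1_{\mathcal A}$ rather than by comparing the triples $(1_{\mathcal A},a,b)$ and $(a,1_{\mathcal A},b)$, and you write down the two-sided inverse $\Phi(a_0^2)$ of $u$ directly instead of arguing $\mathcal B=\mathcal B u$.
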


\begin{proof}
We prove the result by  several claims. Put \(u=\Phi(1_{\mathcal A})\).

 \textbf{Claim 1.} $\Phi$ is injective.

Suppose that \(a,b\in\mathcal A\)
and \(\Phi(a)=\Phi(b)\).
For every \(x\in\mathcal A\), the hypothesis applied to the triples \((a,x,1_{\mathcal A})\) and \((b,x,1_{\mathcal A})\) gives
\[
V_0(\Phi(a)\Phi(x)u)=V_0(ax)
\]
and
\[
V_0(\Phi(b)\Phi(x)u)=V_0(bx).
\]
Since \(\Phi(a)=\Phi(b)\), it follows that
\[
V_0(ax)=V_0(bx)\text{~for all~} x\in \mathcal{A}.
\]
By Proposition \ref{prop2.3}, one gets \(a=b\). Hence \(\Phi\) is injective.

\textbf{Claim 2.} \(u\in Z(\mathcal{B})\).

Let \(a,b\in\mathcal A\). Applying the hypothesis to \((1_{\mathcal A},a,b)\) and
\((a,1_{\mathcal A},b)\), one obtains
\[
V_0(u\Phi(a)\Phi(b))=V_0(ab)
\]
and
\[
V_0(\Phi(a)u\Phi(b))=V_0(ab).
\]
Thus
\[
V_0(u\Phi(a)\Phi(b))=V_0(\Phi(a)u\Phi(b))\text{~for all~} b \in \mathcal{A}.
\]
Since \(\Phi\) is surjective, \(\Phi(b)\) runs through all elements of
\(\mathcal B\).
Applying Proposition \ref{prop2.3} again, we see that
\(
u\Phi(a)=\Phi(a)u
\) holds for all $a\in\mathcal A$. Now the surjectivity of $\Phi$ forces
\(
u\in Z(\mathcal B).
\)

 \textbf{Claim 3.} $\Phi(a)\Phi(b)=\Phi(ab)u = u\Phi(ab)\text{~for all~} a, b \in \mathcal{A}$.

Let \(a,b,c\in\mathcal A\). The hypothesis gives
\[
V_0(\Phi(a)\Phi(b)\Phi(c))=V_0(abc).
\]
Applying the same hypothesis to the triple \((ab,1_{\mathcal A},c)\), we also have
\[
V_0(\Phi(ab)u\Phi(c))=V_0(abc).
\]
Therefore
\[
V_0(\Phi(a)\Phi(b)\Phi(c))
=
V_0(\Phi(ab)u\Phi(c))\text{~for all~} c \in \mathcal{A}.
\]
Since \(\Phi(c)\) runs over all elements of
\(\mathcal B\), Proposition \ref{prop2.3} entails
\(
\Phi(a)\Phi(b)=\Phi(ab)u.
\)
Since \(u\in Z(\mathcal B)\), we also have
\(
\Phi(ab)u=u\Phi(ab).
\)

 \textbf{Claim 4.} $u$ is invertible.

Since \(\Phi\) is surjective,
there exists \(e\in\mathcal A\) such that
\(
\Phi(e)=1_{\mathcal B}.
\)
Taking \(b=e\) in Claim 3, we get
\[
\Phi(ae)u=\Phi(a)\Phi(e)=\Phi(a) \text{~for all~} a \in \mathcal{A}.
\]
For \(y\in\mathcal B\), there exists
\(a_1\in\mathcal A\) such that \(y=\Phi(a_1)\). Hence
\(
y=\Phi(a_1e)u\in \mathcal B u.
\)
Therefore \(\mathcal B \subseteq \mathcal B u\subseteq \mathcal B\). Therefore \(\mathcal B = \mathcal B u\). In particular,
\(1_{\mathcal B}\in \mathcal B u\), so there exists \(v\in\mathcal B\) such that
\(
vu=1_{\mathcal B}.
\)
Since \(u\in Z(\mathcal B)\), we also have
\(
uv=vu=1_{\mathcal B}.
\)
Thus \(u\) is invertible and \(u^{-1}=v\).

\textbf{Claim 5.}  $\Psi = u^{-1}\Phi$ is unital, multiplicative and bijective.

Clearly,
\(
\Psi(1_{\mathcal A})=u^{-1}\Phi(1_{\mathcal A})=1_{\mathcal B}.
\)
Since \(u^{-1}\in Z(\mathcal B)\), by Claim 3, for any \(a,b\in\mathcal A\), we have
\[
\begin{aligned}
\Psi(a)\Psi(b)
&=u^{-1}\Phi(a)u^{-1}\Phi(b)  \\
&=u^{-2}\Phi(a)\Phi(b)  =u^{-2}\Phi(ab)u  \\
&=u^{-1}\Phi(ab)  =\Psi(ab).
\end{aligned}
\]
So $\Psi(ab)=\Psi(a)\Psi(b)$ holds for all $a,b\in \mathcal A$.

The bijectivity of $\Psi$ is obvious.

\if false  \textbf{Claim 6.} $\Psi$ is bijective.

If
$\Psi(a)=\Psi(b)$, then
\(
    \Phi(a)=u\Psi(a)
             =u\Psi(b)
             =\Phi(b),
\)
and the injectivity of $\Phi$ implies $a=b$. Hence \(\Psi\) is injective. To prove
surjectivity, let $b\in\mathcal B$, since $\Phi$ is surjective, there exists
$a\in\mathcal A$ such that $\Phi(a)=u b$. Hence
\(
    u\Psi(a)=\Phi(a)=u b,
\)
and thus $\Psi(a)=b$. Consequently, $\Psi$ is bijective.\fi

The proof is complete.
\end{proof}

The next result is a generalization of Theorem \ref{thm3.1}.

\begin{Theorem}\label{thm3.2}
Let $\mathcal A$ and $\mathcal B$ be unital $C^*$-algebras, and let
$\Phi:\mathcal A\to\mathcal B$ be a surjective map. Suppose that for some fixed integer $n\in \mathbb{N}$ with $n\geq 3$,
\[
V_0\bigl(\Phi(a_1)\Phi(a_2)\cdots \Phi(a_n)\bigr)
=
V_0(a_1a_2\cdots a_n) \text{~for all~} a_1,\ldots,a_n\in\mathcal {A}.
\]
Put \(u=\Phi(1_{\mathcal A})\). Then the following assertions hold.
\begin{enumerate}
    \item $\Phi$ is injective.
    \item \(u\) is an invertible element in $Z(\mathcal B)$.
    \item For every integer $m\geq 1$ and every
    $a_1,\ldots,a_m\in\mathcal A$,
    \[
    \Phi(a_1)\Phi(a_2)\cdots\Phi(a_m)
    =
    \Phi(a_1a_2\cdots a_m)u^{\,m-1}=u^{m-1}\Phi(a_1a_2\cdots a_m).
    \]
    \item The map
    \(
    \Psi:\mathcal A\to\mathcal B\) defined by
    \(\Psi=u^{-1}\Phi
    \)
    is a unital multiplicative bijection.
\end{enumerate}
\end{Theorem}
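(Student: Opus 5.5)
The proof follows the scheme of Theorem \ref{thm3.1}. The idea is to pad products with copies of $1_{\mathcal A}$ and then cancel a common right factor using Proposition \ref{prop2.3}. Throughout, put $u=\Phi(1_{\mathcal A})$; since $n\ge 3$, any product of fewer than $n$ elements can be padded with $1_{\mathcal A}$'s to a product of length $n$.

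\textbf{Injectivity.} Suppose $\Phi(a)=\Phi(b)$. For $x\in\mathcal A$, apply the hypothesis to $(a,x,1_{\mathcal A},\dots,1_{\mathcal A})$ and $(b,x,1_{\mathcal A},\dots,1_{\mathcal A})$. This gives
\[
V_0(ax)=V_0(\Phi(a)\Phi(x)u^{n-2})=V_0(bx)
\]
for all $x$, so $a=b$ by Proposition \ref{prop2.3}.

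\textbf{Centrality of $u$.} Compare the tuples $(1_{\mathcal A},a,b,1_{\mathcal A},\dots,1_{\mathcal A})$ and $(a,1_{\mathcal A},b,1_{\mathcal A},\dots,1_{\mathcal A})$. The first gives $V_0(u\Phi(a)\Phi(b)u^{n-3})=V_0(ab)$ and the second gives $V_0(\Phi(a)u\Phi(b)u^{n-3})=V_0(ab)$. To apply Proposition \ref{prop2.3}, the trailing factor should be an arbitrary element $y$ of $\mathcal B$. So I will instead put $b$ in the last slot and pad in the middle: use $(1_{\mathcal A},a,1_{\mathcal A},\dots,1_{\mathcal A},b)$ versus $(a,1_{\mathcal A},\dots,1_{\mathcal A},b)$. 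This yields
\[
V_0(u\Phi(a)u^{n-3}y)=V_0(\Phi(a)u^{n-2}y)\quad\text{for all } y\in\mathcal B,
\]
hence $u\Phi(a)u^{n-3}=\Phi(a)u^{n-2}$. Cancelling the $u^{n-3}$ factor is delicate before $u$ is known to be invertible. I will therefore avoid it by first establishing a product rule with the padding placed on the left. Comparing $(a,b,1_{\mathcal A},\dots,1_{\mathcal A},c)$ with $(ab,1_{\mathcal A},\dots,1_{\mathcal A},c)$ gives
\[
\Phi(a)\Phi(b)u^{n-3}=\Phi(ab)u^{n-2}.
\]
Now choose $e$ with $\Phi(e)=1_{\mathcal B}$ and take $a=e$. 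This gives $\Phi(b)u^{n-3}=\Phi(eb)u^{n-2}$, so every element of $\mathcal B u^{n-3}$ lies in $\mathcal B u^{n-2}$. Taking $b=e$ as well gives $u^{n-3}=\Phi(e^2)u^{n-2}$. Iterating, $1_{\mathcal B}\in\mathcal B u$ once $n-3$ reduces to $0$; more simply, $u^{n-3}=wu^{n-2}$ with $w=\Phi(e^2)$. I expect this bookkeeping to be the main obstacle. The cleanest fix is to also compare tuples with padding at the front: for example, $(1_{\mathcal A},\dots,1_{\mathcal A},a,c)$ versus $(1_{\mathcal A},\dots,1_{\mathcal A},1_{\mathcal A},\dots)$ combined with the surjectivity trick of Claim 4 of Theorem \ref{thm3.1}. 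That trick gives $\mathcal B=\mathcal B u^{k}$ for suitable $k\ge1$, so $u$ is left invertible. A symmetric comparison, moving the $\Phi(a)$ slot past the $u$'s, gives $u^{n-2}\Phi(a)=\Phi(a)u^{n-2}$. Combined with one-sided invertibility, this yields that $u$ is invertible and central.

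\textbf{Product formula and $\Psi$.} Once $u$ is invertible and central, $\Phi(a)\Phi(b)u^{n-3}=\Phi(ab)u^{n-2}$ gives $\Phi(a)\Phi(b)=\Phi(ab)u$. Induction on $m$ then gives
\[
\Phi(a_1)\cdots\Phi(a_m)=\Phi(a_1\cdots a_m)u^{m-1},
\]
and centrality gives the symmetric form. Item (4) follows verbatim from Claim 5 of Theorem \ref{thm3.1}: the map $\Psi=u^{-1}\Phi$ is unital and multiplicative, and it is bijective because $\Phi$ is bijective and $u^{-1}$ is invertible.
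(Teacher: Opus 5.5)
You have a genuine gap: for $n\ge 4$ you never prove that $u$ is invertible. Everything after that point depends on it, namely centrality of $u$, the product formula, and item (4). Your injectivity argument is correct, and so are the identities you obtain by padding with $1_{\mathcal A}$: $u\Phi(a)u^{n-3}=\Phi(a)u^{n-2}$, $\Phi(a)\Phi(b)u^{n-3}=\Phi(ab)u^{n-2}$, and $u^{n-2}\in Z(\mathcal B)$.

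The trouble is what you extract from them. Substituting $e$ into $\Phi(a)\Phi(b)u^{n-3}=\Phi(ab)u^{n-2}$ only gives $\mathcal B u^{n-3}=\mathcal B u^{n-2}$ and $u^{n-3}=\Phi(e^2)u^{n-2}$. Neither forces $1_{\mathcal B}\in\mathcal B u$: a nontrivial idempotent $u$ satisfies both relations (with $1_{\mathcal B}$ in place of $\Phi(e^2)$) and is not left invertible. No ``iteration'' lowers the exponent. The front-padding fix you sketch is not a well-defined comparison, and it does not produce an identity of the form $1_{\mathcal B}=wu^k$. So the step you yourself flag as the main obstacle remains open. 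For $n=3$ your argument is complete, since then $u^{n-3}=1_{\mathcal B}$.

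The missing idea is to pad with $e$ (where $\Phi(e)=1_{\mathcal B}$) instead of $1_{\mathcal A}$. The padding then disappears on the $\mathcal B$-side, while both compared tuples still give the same product on the $\mathcal A$-side. This is the paper's route:
\begin{itemize}
\item Comparing $(1_{\mathcal A},a,c,e,\ldots,e)$ with $(a,1_{\mathcal A},c,e,\ldots,e)$ gives $V_0(u\Phi(a)\Phi(c))=V_0(\Phi(a)u\Phi(c))$. Proposition \ref{prop2.3} then gives $u\in Z(\mathcal B)$ directly.
\item Comparing $(a,b,c,e,\ldots,e)$ with $(ab,1_{\mathcal A},c,e,\ldots,e)$ gives $\Phi(a)\Phi(b)=\Phi(ab)u$ with no stray powers of $u$.
\item Claim 4 of Theorem \ref{thm3.1} then yields $\mathcal B=\mathcal B u$ verbatim.
\end{itemize}

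Alternatively, you can stay within your own framework. Compare $(e,\ldots,e,c)$, with $n-1$ copies of $e$, against $(e^{n-1},1_{\mathcal A},\ldots,1_{\mathcal A},c)$. This gives $1_{\mathcal B}=\Phi(e^{n-1})u^{n-2}$, so $u$ is left invertible. Your closing step then works. If $wu=1_{\mathcal B}$, then $w^{n-2}u^{n-2}=1_{\mathcal B}$, and centrality of $u^{n-2}$ makes this inverse two-sided, so $u$ is invertible. Finally, $u\Phi(a)u^{n-3}=\Phi(a)u^{n-2}$ cancels to $u\Phi(a)=\Phi(a)u$.
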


\begin{proof}
We first prove that \(\Phi\) is injective. Suppose that \(a,b\in \mathcal A\) and
\(\Phi(a)=\Phi(b)\). For every \(x\in \mathcal A\), applying the hypothesis to
\(
(a,x,1_{\mathcal A},\ldots,1_{\mathcal A})
        ~\text{and}~
(b,x,1_{\mathcal A},\ldots,1_{\mathcal A}),
\)
where \(1_{\mathcal A}\) appears \(n-2\) times,  one gets
\(
V_0(ax)=V_0(bx)\text{~for all x $\in \mathcal{A}$}.
\)
By Proposition \ref{prop2.3}, we have \(a=b\). Thus \(\Phi\) is injective.

Since $\Phi$ is surjective, there exists $e\in\mathcal A$ such that
\(
\Phi(e)=1_{\mathcal B}.
\)
Next, we show that $u\in Z(\mathcal B)$. Applying the hypothesis to
\(
(1_{\mathcal A},a,c,e,\ldots,e)
~\text{and}~
(a,1_{\mathcal A},c,e,\ldots,e)
\)
gives
\(
V_0\bigl(u\Phi(a)\Phi(c)\bigr)
=
V_0\bigl(\Phi(a)u\Phi(c)\bigr).
\)
As $c$ runs through $\mathcal A$, the element $\Phi(c)$ runs through all of
$\mathcal B$. By Proposition \ref{prop2.3}, it follows that
\(
u\Phi(a)=\Phi(a)u.
\)
Since $a\in\mathcal A$ was arbitrary and $\Phi$ is surjective, it follows
that
\(
u\in Z(\mathcal B).
\)

 Let $a,b,c\in\mathcal A$ be arbitrary.
Applying hypothesis to
\(
(a,b,c,e,\ldots,e)
~\text{and}~
(ab,1_{\mathcal A},c,e,\ldots,e),
\)
we get
\(
V_0\bigl(\Phi(a)\Phi(b)\Phi(c)\bigr)
=
V_0\bigl(\Phi(ab)u\Phi(c)\bigr).
\)
By Proposition \ref{prop2.3} and note that $u\in Z(\mathcal B)$, we get
$$\label{eq3.1}
\Phi(a)\Phi(b)=\Phi(ab)u=u\Phi(ab). \eqno(3.1)
$$

We now show that \(u\) is invertible. Taking \(b=e\) in the identity just
proved gives \(\Phi(a)=\Phi(ae)u\) for all \(a\in \mathcal A\).
Similar to the proof of Theorem \ref{thm3.1}, we see that
\(
\mathcal B=\mathcal B u.
\)
In particular, $1_{\mathcal B}\in\mathcal B u$, and hence there exists
$v\in\mathcal B$ such that
\(
vu=1_{\mathcal B}.
\)
Since $u\in Z(\mathcal B)$,
\(
uv=vu=1_{\mathcal B}.
\)
Therefore $u$ is invertible and $u^{-1}=v$ and the statement (2) is true.

We prove the statement (3) by induction. Let $m\geq 1$.  The case \(m=1\) is trivial,
and the case \(m=2\) is precisely Eq.(3.1). Suppose (3) holds for some \(m\geq 2\). Since $u$ is central,
we have
\[
\begin{aligned}
\Phi(a_1)\cdots\Phi(a_m)\Phi(a_{m+1})
&=
\Phi(a_1\cdots a_m)u^{\,m-1}\Phi(a_{m+1})\\
&=
\Phi(a_1\cdots a_m)\Phi(a_{m+1})u^{\,m-1}\\
&=
\Phi(a_1\cdots a_ma_{m+1})u^m.
\end{aligned}
\]
It follows from the induction that the statement (3) is true.

For (4), define
\(
\Psi(1_{\mathcal A})
=
u^{-1}\Phi(1_{\mathcal A})
=
u^{-1}u
=
1_{\mathcal B}.
\)
Then by Eq.(3.1),  since $u^{-1}\in Z(\mathcal B)$,
\(
\Psi(a)\Psi(b)
=
u^{-1}\Phi(a)u^{-1}\Phi(b)
=
u^{-2}\Phi(a)\Phi(b)
=
u^{-1}\Phi(ab)
=
\Psi(ab)
\)
holds for all $a,b\in\mathcal A$,
That is, $\Psi$ is multiplicative. The bijectivity of $\Psi$ is clear.
\end{proof}

In the sequel, we turn to several special $C^*$-algebras on which the preservers of algebraic maximal numerical range of triple products are precisely central multiples of $*$-isomorphisms.

\begin{Theorem}\label{thm3.3}
Let $\mathcal A$ and $\mathcal B$ be nonzero von Neumann algebras and assume  $\mathcal A$ has no
central summands of type $I_1$. Let $\Phi:\mathcal A\to\mathcal B$ be a surjective map. Then, $\Phi$ satisfies
\[V_0(\Phi(a)\Phi(b)\Phi(c))=V_0(abc)\text{~for all~} a, b, c \in \mathcal{A}.\]
if and only if there exist an element $u\in Z(\mathcal B)$ with $u^3=1_{\mathcal B}$ and a 
$*$-isomorphism $\Psi:\mathcal A\to\mathcal B$ such that $\Phi(a)=u\Psi(a)$ for all $a\in\mathcal A$.
\end{Theorem}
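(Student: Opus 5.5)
Sufficiency is a direct check. If $\Phi=u\Psi$ with $u\in Z(\mathcal B)$, $u^3=1_{\mathcal B}$ and $\Psi$ a $*$-isomorphism, then $\Phi(a)\Phi(b)\Phi(c)=u^3\Psi(abc)=\Psi(abc)$. A $*$-isomorphism is isometric and induces a bijection $f\mapsto f\circ\Psi^{-1}$ from $\mathcal S(\mathcal A)$ onto $\mathcal S(\mathcal B)$ that carries $\mathcal S_{\max}(x)$ onto $\mathcal S_{\max}(\Psi(x))$. Hence $V_0(\Psi(x))=V_0(x)$, and the identity follows.

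For necessity, start from Theorem \ref{thm3.1}: $u=\Phi(1_{\mathcal A})$ is a central invertible element, and $\Psi=u^{-1}\Phi$ is a unital multiplicative bijection. The plan is to show, in order, that (i) $u^3=1_{\mathcal B}$, (ii) $\Psi$ is additive, and (iii) $\Psi$ is a $*$-map.

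For (i), take $a=b=c=1_{\mathcal A}$, which gives $V_0(u^3)=V_0(1_{\mathcal A})=\{1\}$. That alone is not enough, so we use more triples. By Claim 3 of Theorem \ref{thm3.1}, $\Phi(a)\Phi(b)\Phi(c)=u^3\Psi(abc)$, so
\[
V_0(u^3\Psi(x))=V_0(x)\quad\text{for all } x\in\mathcal A .
\]
Taking $x=\Psi^{-1}(y)$ gives $V_0(u^3y)=V_0(\Psi^{-1}(y))$, and taking $x=1_{\mathcal A}$ gives $V_0(u^3)=\{1\}$. To force $u^3=1$, restrict to projections. $\Psi$ maps idempotents to idempotents. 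For a projection $p\in\mathcal A$, $q=\Psi(p)$ is an idempotent with $V_0(u^3q)=V_0(p)=\{1\}$ when $p\neq0$. The key tool is Proposition \ref{prop2.5} applied to symmetries: $s=1-2p$ satisfies $s^2=1$, so $\Psi(s)^2=1$. Also $V_0(u^3\Psi(s))=V_0(s)$, which is $\{\pm1\}$-valued and nonzero. Combining central $u^3$ with Proposition \ref{prop2.5} and Proposition \ref{prop2.6}, I expect to show that $\Psi(s)$ is self-adjoint, so $\Psi(p)$ is a projection, and that $u^3$ is unitary with $V_0(u^3q)=\{1\}$ for all these projections. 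A central unitary $w=u^3$ with $V_0(wq)=\{1\}$ for every nonzero projection $q$ in the range gives $w=1$. To see this, pass to spectral projections of $w$ inside the von Neumann algebra $\mathcal B$ and use Proposition \ref{prop2.6} for the normal element $wq$: $V_0(wq)$ is the convex hull of the extremal spectrum of $wq$.

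For (ii) and (iii), knowing $\Psi$ is a multiplicative bijection preserving projections, invoke the known structure of multiplicative bijections on von Neumann algebras without central summands of type $I_1$. This is a Martindale-type result (in the form of Bre\v{s}ar/Hou--Qi): such maps are additive, hence ring isomorphisms. The hypothesis on $\mathcal A$ supplies the nontrivial idempotents needed, namely a projection $p$ with $\overline{\mathcal A p\mathcal A}$ and $\overline{\mathcal A(1-p)\mathcal A}$ both of full central support. A ring isomorphism between von Neumann algebras that preserves projections, i.e.\ $\Psi(p)^*=\Psi(p)$, is real-linear and $*$-preserving, because self-adjoint elements are norm limits of real combinations of orthogonal projections. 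Complex linearity versus conjugate linearity is then settled by the preserver identity: conjugate-linear pieces on a central summand would turn $V_0(i\cdot1)=\{i\}$ into $\{-i\}$, contradicting $V_0(\Psi(x))=V_0(u^{-3}\cdot)$ once $u^3=1$.

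The main obstacle is the step from "multiplicative" to "additive and self-adjoint-preserving". It requires carefully verifying the hypotheses of Martindale's theorem in the von Neumann setting without type $I_1$ summands, and it also requires showing that $\Psi$ sends projections to self-adjoint idempotents. For the latter, Proposition \ref{prop2.5} is the intended tool, and the condition $V_0\neq\{0\}$ must be secured via Proposition \ref{prop2.6}.
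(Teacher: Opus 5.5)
Your sufficiency argument is fine, but step (i) of the necessity part has a genuine gap, and the route you sketch is circular.

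\begin{itemize}
\item Proposition~\ref{prop2.5} needs $V_0(\Psi(s))\neq\{0\}$ for $s=1_{\mathcal A}-2p$. Before knowing $u^3=1_{\mathcal B}$ you only control $V_0(u^3\Psi(s))=V_0(s)$, which is $[-1,1]$ for nontrivial $p$. The proposition cannot be applied to $u^3\Psi(s)$, whose square is $u^6$.
\item Deducing that $\Psi(p)$ is a projection from self-adjointness of $\Psi(s)$ uses $\Psi(s)=1_{\mathcal B}-2\Psi(p)$. That is additivity and homogeneity of $\Psi$, which in your plan come only afterwards.
\item You would still need the spectral projections of $u^3$ to be images of nonzero projections, which you never justify. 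The asserted unitarity of $u^3$ is likewise unproved.
\end{itemize}

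The missing idea is to use only \emph{central} projections, for which no projection-preservation result is needed. A multiplicative bijection maps $Z(\mathcal A)$ onto $Z(\mathcal B)$, hence central idempotents onto central idempotents. Idempotents of the commutative $C^*$-algebras $Z(\mathcal A)$ and $Z(\mathcal B)$ are projections. So your own identity $V_0(u^3y)=V_0(\Psi^{-1}(y))$ already finishes the job. Take $y=q=E_{u^3}(\overline{D(\lambda,r)})$ with $\lambda\in\sigma(u^3)\setminus\{1\}$ and $0<r<\min\{|\lambda|,|\lambda-1|\}$. Then $\Psi^{-1}(q)$ is a nonzero central projection, so $V_0(u^3q)=\{1\}$. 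On the other hand $V_0(u^3q)\subseteq\overline{D(\lambda,r)}$, a contradiction. This is how the paper obtains $u^3=1_{\mathcal B}$, and hence $V_0(\Psi(a))=V_0(a)$, before touching general projections.

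Steps (ii)--(iii) also omit two ingredients. A ring isomorphism is a priori only $\mathbb Q$-linear. Your passage to $*$-preservation via norm limits of real combinations of projections therefore presupposes both real homogeneity and continuity of $\Psi$, and you prove neither. Your remark about conjugate-linear pieces already assumes real-linearity.

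In the paper, the missing pieces are supplied in this order:
\begin{itemize}
\item Complex homogeneity comes directly from Proposition~\ref{prop2.3}, since $V_0(\Psi(\lambda a)\Psi(y))=V_0(\lambda ay)=\lambda V_0(ay)=V_0(\lambda\Psi(a)\Psi(y))$ for all $y$.
\item Additivity is then quoted from the Martindale/Song result.
\item Proposition~\ref{prop2.5} is applied to $\Psi(1_{\mathcal A}-2p)=1_{\mathcal B}-2\Psi(p)$. This is now legitimate because $V_0(\Psi(s))=V_0(s)\neq\{0\}$.
\item Continuity follows from the closed graph theorem, Proposition~\ref{prop2.7}, and $V_0(bb^*)=\{\|b\|^2\}$ from Proposition~\ref{prop2.6}. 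Alternatively you could cite Johnson's automatic continuity of homomorphisms onto semisimple Banach algebras.
\end{itemize}
Only with continuity does the approximation of self-adjoint elements by real combinations of projections yield $\Psi(a^*)=\Psi(a)^*$.
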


\begin{proof}
For the ``if" part, it is obvious that for all $a,b,c\in\mathcal{A}$ we obtain
\[
\begin{aligned}
V_0(\Phi(a)\Phi(b)\Phi(c))
&= V_0(u\Psi(a) \cdot u\Psi(b) \cdot u\Psi(c)) \\
&= V_0(u^3 \cdot \Psi(a)\Psi(b)\Psi(c)) \\
&= V_0( \Psi(abc)) \\
&= V_0(abc)
\end{aligned}
\]
Therefore, it suffices to show that
\(
V_0\big(\Psi(a)\big) = V_0(a) \text{~holds for all~} a\in\mathcal{A}.
\)
Because $\Psi$ is a $*$-isomorphism, we have $\Psi(1_\mathcal{A}) = 1_\mathcal{B}$, $\|\Psi(a)\| = \|a\|$, $\Psi(a^*a) = \Psi(a)^*\Psi(a)$ for all $a\in\mathcal A$, $\Psi$ maps positive elements of $\mathcal{A}$ into positive elements of $\mathcal{B}$, $\Psi$ is bijective, and $\Psi^{-1}\colon\mathcal{B}\to\mathcal{A}$ is also a $*$-isomorphism.

For any $a\in\mathcal A$ and  arbitrary scalar $\lambda \in V_0(a)$, there exists a state $f \in \mathcal{S}(\mathcal{A})$ such that
\[
f(a) = \lambda ~\text{and}~ f(a^*a) = \|a\|^2.
\]
Define $\hat{f}\colon\mathcal{B}\to\mathbb{C}$ by
\(
\hat{f}(b) = f\big(\Psi^{-1}(b)\big)\) for \( b\in\mathcal{B}.
\)
Since $\Psi^{-1}$ preserves positivity and $f$ is a positive linear functional, the map $\hat{f} = f\circ\Psi^{-1}$ is also a positive linear functional. As
\(
\hat{f}(1_\mathcal{B}) = f\big(\Psi^{-1}(1_\mathcal{B})\big) = f(1_\mathcal{A}) = 1,
\)
we see that $\hat{f} \in \mathcal{S}(\mathcal{B})$.
Furthermore,
\(
\hat{f}\big(\Psi(a)\big) = f\big(\Psi^{-1}(\Psi(a))\big) = f(a) = \lambda.
\)
Using the $*$-multiplicativity and isometry of $\Psi$,
\[
\begin{aligned}
\hat{f}\big(\Psi(a)^*\Psi(a)\big)
= \hat{f}\big(\Psi(a^*a)\big)
= f\big(\Psi^{-1}(\Psi(a^*a))\big)
= f(a^*a)
= \|a\|^2
= \|\Psi(a)\|^2.
\end{aligned}
\]
This ensures that $\lambda \in V_0\big(\Psi(a)\big)$, and hence $V_0(a) \subseteq V_0\big(\Psi(a)\big)$.
The reverse inclusion $V_0\big(\Psi(a)\big) \subseteq V_0(a)$ follows from exactly the same argument applied to the  $*$-isomorphism $\Psi^{-1}$.
Hence we conclude that $V_0(a) = V_0\big(\Psi(a)\big)$. So the condition is sufficient.

We will prove the ``only if" part by a series of Claims. Since von Neumann algebras are unital \(C^*\)-algebras, Theorem \ref{thm3.1} applies. So we have $\Phi=u\Psi$ with $\Psi$ a unital bijective multiplicative map and $u=\Phi(I_{\mathcal A})\in Z(\mathcal B)$  invertible.

\textbf{Claim 1.} $\Psi(Z(\mathcal A))=Z(\mathcal B)$.

Let $z\in Z(\mathcal{A})$ and $y\in\mathcal{B}$. By surjectivity,
there exists $x\in\mathcal{A}$ such that $y=\Psi(x)$. Using the
multiplicativity of $\Psi$, we obtain
\[\Psi(z)y=\Psi(z)\Psi(x)=\Psi(zx)=\Psi(xz)=\Psi(x)\Psi(z)=y\Psi(z).\]
Thus, $\Psi(z)\in Z(\mathcal{B})$, and consequently
\(
\Psi\bigl(Z(\mathcal{A})\bigr)\subseteq Z(\mathcal{B}).
\)

Since the inverse of a bijective multiplicative map is multiplicative,
the same argument applied to $\Psi^{-1}$ gives
\(
\Psi^{-1}\bigl(Z(\mathcal{B})\bigr)\subseteq Z(\mathcal{A}).
\)
Therefore,
\(
\Psi\bigl(Z(\mathcal{A})\bigr)=Z(\mathcal{B}).
\)

\textbf{Claim 2.} $\Psi(\mathcal P(Z(\mathcal A)))=\mathcal P(Z(\mathcal B))$.

Let \(p\in\mathcal{P}(Z(\mathcal{A}))\). By Claim~1,
\(\Psi(p)\in Z(\mathcal{B})\), and
\(
\Psi(p)^2=\Psi(p^2)=\Psi(p).
\)
Hence, \(\Psi(p)\) is an idempotent in the commutative \(C^*\)-algebra
\(Z(\mathcal{B})\). As every idempotent in a commutative \(C^*\)-algebra
is a projection, we see that
\(
\Psi(p)\in\mathcal{P}(Z(\mathcal{B})).
\)
Therefore,
\(
\Psi\bigl(\mathcal{P}(Z(\mathcal{A}))\bigr)
\subseteq
\mathcal{P}(Z(\mathcal{B})).
\)
\if false
Since \(\Psi^{-1}\) is also a bijective multiplicative map and, by
Claim 1,
\(
\Psi^{-1}\bigl(Z(\mathcal{B})\bigr)=Z(\mathcal{A}),
\)\fi
The same argument applied to \(\Psi^{-1}\) yields the reverse
inclusion. Thus,
\(
\Psi\bigl(\mathcal{P}(Z(\mathcal{A}))\bigr)
=
\mathcal{P}(Z(\mathcal{B})).
\)

\textbf{Claim 3.} $u^3=1_{\mathcal B}\in Z(\mathcal B)$ and hence $V_0(\Psi(a))=V_0(a)$ holds for all $a\in\mathcal A$.

Put $v=u^3$. Since $u\in Z(\mathcal B)$, we have $v\in Z(\mathcal B)$. We first show that
\(
V_0(vq)=\{1\}
\) for every nonzero central projection $q\in Z(\mathcal B)$.

Let \(q\in\mathcal P(Z(\mathcal B))\) be nonzero. By Claim 2, there exists
\(p\in\mathcal P(Z(\mathcal A))\) such that \(\Psi(p)=q\). Since \(\Psi\) is injective and \(\Psi(0)=0\), we
have \(p\neq0\). From $\Phi=u\Psi$ and $V_0(\Phi(a)\Phi(b)\Phi(c))=V_0(abc)$, we obtain
\[
V_0(v\Psi(a)\Psi(b)\Psi(c))=V_0(abc)
\]
for all \(a,b,c\in\mathcal A\). Taking
\(
a=b=c=p,
\)
we obtain $V_0(vq)=V_0(p)$. Since \(p\) is a nonzero projection,
\(\|p\|=1\), and every \(f\in \mathcal{S}_{\max}(p)\) satisfies \(f(p)=f(p^*p)=1\). Hence \(V_0(p)=\{1\}\), and therefore \(V_0(vq)=\{1\}\).

We now prove that $v=1_{\mathcal B}$. Suppose, to the contrary, that $v\neq 1_{\mathcal B}$. Then there exists $\lambda\in\sigma(v)$ with $\lambda\neq1$.
Furthermore, \(v=u^3\) is invertible, so $0\notin\sigma(v)$. Choose $r>0$ such that $0<r<\min\{|\lambda|, |\lambda-1|\}$.
Let
\(
\Delta:=\overline{D(\lambda,r)}
\)
and define
\(
q:=E_v(\Delta),
\)
 where \(E_v\) denotes the spectral measure of \(v\).
Since
\(\lambda\in\sigma(v)\), the spectral projection \(q\) is nonzero.
Moreover, because \(v\in Z(\mathcal B)\), all its spectral projections belong to \(Z(\mathcal B)\). Thus,
\(
q\in\mathcal P(Z(\mathcal B)).
\)

By the functional calculus, $\|(v-\lambda 1_{\mathcal B})q\|\leq r$. In fact,
because of $v\in Z(\mathcal B)$, it is normal. By the spectral theorem, there exists a spectral measure $E_v$ such that
\[
v=\int_{\sigma(v)} z \mathrm d E_v(z).
\]
\if false where $z$ is the inclusion function on $\sigma(v)$.\fi
Then
\[
(v-\lambda 1_{\mathcal B})q = \left(\int_{\sigma(v)} (z-\lambda) \mathrm d E_v(z)\right)E_v(\Delta) = \int_{\Delta\cap\sigma(v)} (z-\lambda) \mathrm d E_v(z).
\]
Note that
\[
\left\|\int_{\Delta\cap\sigma(v)} (z-\lambda) \mathrm d E_v(z)\right\| \leq \sup_{z\in\Delta\cap\sigma(v)} |z-\lambda|\leq \sup_{z\in\Delta} |z-\lambda| \leq r.
\]
So $\|(v-\lambda 1_{\mathcal B})q\| \leq r$.

Also, $\|vq\|\neq 0$. Indeed, if $\|vq\|=0$, then $vq=0$, so $q=v^{-1}vq=0$, contrary to the choice of \(q\). Hence $\|vq\|>0$.

Let \(\mu\in V_{0}(vq)\). Then there exists
\(
f\in \mathcal{S}_{\max}(vq)
\)
such that
\(
\mu=f(vq)
\)
and
\(
f\bigl((vq)^*(vq)\bigr)=\|vq\|^2.
\)
 Since $vq=vq^2=qvq$, we have $vq\in q\mathcal B q$. Hence,
\(
0\leq (vq)^*vq \leq \|vq\|^2q.
\)
Applying \(f\) gives
\(
\|vq\|^{2}=f((vq)^*vq) \leq f(\|vq\|^2q)= \|vq\|^{2}f(q).
\)
Because \(\|vq\|>0\), it follows that
\(
f(q)=1.
\)
Then
\[
|\mu-\lambda|=|f((v-\lambda 1_{\mathcal B})q)| \leq \|(v-\lambda 1_{\mathcal B})q\| \leq r,
\]
Hence $V_0(vq)\subseteq \overline{D(\lambda,r)}$. But $r<|\lambda-1|$ implies $1\notin \overline{D(\lambda,r)}$, and consequently $1\notin V_0(vq)$, contradicting $V_0(vq)=\{1\}$.

Therefore, we must have $v=1_{\mathcal B}$, that is, $u^3=1_{\mathcal B}$. Now it is clear that
$$ V_0(\Psi(a))=V_0(\Phi(a)\Phi(1_{\mathcal A})\Phi(1_{\mathcal A}))=V_0(a)
$$
for every $a\in\mathcal A$.

\textbf{Claim 4.} $\Psi$ is linear.

By Claim 3, $V_0(\Psi(a))=V_0(a)$ for every $a\in\mathcal A$.
Let
$\lambda\in\mathbb C$, $a\in\mathcal A$, and $x\in\mathcal B$.
By the surjectivity of $\Psi$, there exists $y\in\mathcal A$ such that
\(
\Psi(y)=x.
\)
Then we get
\begin{align*}
V_0(\Psi(\lambda a)x) &= V_0(\Psi(\lambda a)\Psi(y))
= V_0(\Psi(\lambda a y))
= V_0(\lambda a y)
= \lambda V_0(a y)\\
&= \lambda V_0(\Psi(a y))
= \lambda V_0(\Psi(a)\Psi(y))
= V_0(\lambda \Psi(a)\Psi(y))
= V_0(\lambda \Psi(a)x).
\end{align*}
By Proposition \ref{prop2.3}, we conclude
\(
\Psi(\lambda a) = \lambda \Psi(a)
\) for all $\lambda\in\mathbb C$ and $a\in\mathcal A$.  Hence, $\Psi$ is homogeneous.
 As the von Neumann algebra $\mathcal A$ has no central summand of type $I_1$, according to \cite[Theorem 1]{song2014characterizing}, we conclude that $\Psi$ is additive.

\textbf{Claim 5.}
If $p\in\mathcal{A}$ is a projection, then $\Psi(p)\in\mathcal{B}$ is a projection.

If $p = 0$ or $p = 1_\mathcal{A}$, then $\Psi(0) = 0$ and $\Psi(1_\mathcal{A}) = 1_\mathcal{B}$. Both $0$ and the identity are projections in $\mathcal{B}$, so the assertion holds trivially.

Assume $p\in\mathcal P(\mathcal A)$ is nontrivial. Then
\(\sigma(p)=\{0,1\}\). Put \(s=1-2p\). Then
\(s=s^*\) and \(s^2=1\), and hence
\[
\sigma(s)=\sigma(1-2p)=\{-1,1\}
\]
and \(\|s\|=1\). Since \(s\) is
self-adjoint, according to Proposition \ref{prop2.6}, we have
\(
V_0(s)=[-1,1],
\)
and hence,
\(
V_0(\Psi(s))=V_0(s)=[-1,1].
\)
Moreover, as
\(
\Psi(s)^2=\Psi(s^2)=\Psi(1_\mathcal{A}) = 1_\mathcal{B}.
\)
Applying Proposition \ref{prop2.5}, we get
\(
\Psi(s)=\Psi(s)^*.
\)
It follows that
\[
\Psi(p)=\frac{1_\mathcal{B}-\Psi(s)}{2}
\]
is self-adjoint. On the other hand,
\(
\Psi(p)^2=\Psi(p^2)=\Psi(p).
\)
Hence \(\Psi(p)\) is a projection.

\textbf{Claim 6.} $\Psi$ is continuous.

Since $\Psi$ is linear and both $\mathcal{A}$ and $\mathcal{B}$ are Banach spaces, the Closed Graph Theorem shows that it suffices to prove that the graph of $\Psi$ is closed.
Equivalently, it suffices  to verify that if $(a_n)_{n\geq1} \subseteq  \mathcal{A}$ satisfies $\| a_n\|  \to 0$ and $\| \Psi(a_n)- b\| \to 0$, then $b = 0$.

By the surjectivity of $\Psi$, there exists \(c\in \mathcal{A}\) such that
\(\Psi(c)=b^*\). For each \(n\), choose \(\lambda_n\in V_0(a_nc)\).
By the assumption that $\Psi$ preserves the maximal numerical range, we have
\[
V_0(a_n c) = V_0\bigl(\Psi(a_n c)\bigr) = V_0\bigl(\Psi(a_n)\Psi(c)\bigr) = V_0\bigl(\Psi(a_n) b^*\bigr).
\]
Hence $\lambda_n \in V_0\bigl(\Psi(a_n) b^*\bigr)$.
On the other hand,
\(
V_0(x) \subseteq V(x) \subseteq \bigl\{ z\in\mathbb{C} : |z| \le \|x\| \bigr\}
\)
for every
element \(x\) of a unital \(C^*\)-algebra. Therefore,
\[
|\lambda_n|\leq\|a_nc\|\leq\|a_n\|\,\|c\|\to0.
\]
Moreover, \(\| \Psi(a_n)b^* - bb^*\| \to 0\).
Now,
it follows from Proposition \ref{prop2.7} that
\[
0 \in V_0(b b^*).
\]
By Proposition \ref{prop2.6} and the positivity of $bb^*$, we have
\[
0\in V_0(bb^*)=\{\|bb^*\|\}=\{\|b\|^2\},
\]
which forces \(b=0\).
Thus $\Psi$ is a closed operator. By the closed graph theorem,
\(\Psi\) is continuous.

\textbf{Claim 7.} $\Psi$ is a $*$-isomorphism.

By Claim 4, $\Psi$ is an algebraic isomorphism. So we only need to check that $\Psi(a^*)=\Psi(a)^*$ holds for all $a\in\mathcal{A}$.

We first prove that \(\Psi\) maps self-adjoint elements to self-adjoint elements.
Let $a = a^* \in \mathcal{A}$. Since \(\mathcal{A}\) is a von Neumann algebra, there exists real linear combination of projections
\[
a_n=\sum_{k=1}^{m_n}\lambda_{n,k}p_{n,k}.
\]
in $\mathcal A$ such that
\(\| a_n-a\| \to 0\).
Since \(\Psi\) is linear and maps projections to projections by Claims 4 and 5, we have
\[
\Psi(a_n)=\sum_{k=1}^{m_n}\lambda_{n,k}\Psi(p_{n,k}).
\]
is self-adjoint for every
\(n\). Then the continuity of \(\Psi\) (Claim 6) leads to
\(\| \Psi(a_n)-\Psi(a)\| \to 0\). Therefore, as a limit of self-adjoint elements, \(\Psi(a)=\Psi(a)^*\), that is, $\Psi$ sends self-adjoint elements into self-adjoint elements.

Now \if false let $x\in\mathcal{A}$ be arbitrary. Write $x = h + ik$, where
\[
h = \frac{x + x^*}{2}, ~ k = \frac{x - x^*}{2i}
\]
are both self-adjoint elements of $\mathcal{A}$. Then
\(
\Psi(x^*) = \Psi(h - ik) = \Psi(h) - i\Psi(k).
\)
On the other hand,
\[
\Psi(x)^* = \bigl(\Psi(h) + i\Psi(k)\bigr)^* = \Psi(h)^* - i\Psi(k)^* = \Psi(h) - i\Psi(k).
\]
Therefore\fi it is obvious that $\Psi(x^*) = \Psi(x)^*$ for all $x\in\mathcal{A}$ and thus $\Psi$ is a $*$-isomorphism.
\end{proof}

For C$^*$-algebras, we have

\begin{Theorem}\label{thm3.4}
Let $\mathcal{A}$ and $\mathcal{B}$ be  unital prime
$C^*$-algebras. Assume $\mathcal{A}$ is of real rank zero and   $\Phi:\mathcal A\to \mathcal B$ is a surjective map. Then $\Phi$ satisfies
\[
V_0(\Phi(a)\Phi(b)\Phi(c))=V_0(abc) \text{~for all~} a, b, c \in \mathcal{A}
\]
if and only if there exists a scalar $\varepsilon \in\mathbb C$ with $\varepsilon ^3=1$ and a 
$*$-isomorphism $\Psi:\mathcal A\to\mathcal B$ such that
\(
    \Phi=\varepsilon \Psi.
\)
\end{Theorem}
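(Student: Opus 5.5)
The plan is to follow the scheme of Theorem \ref{thm3.3}, replacing von Neumann algebra tools by primeness and real rank zero. The ``if'' part is identical to the one in Theorem \ref{thm3.3}: with $\varepsilon^3=1$ we get $V_0(\Phi(a)\Phi(b)\Phi(c))=V_0(\Psi(abc))$, and a $*$-isomorphism preserves $V_0$ by transporting maximal states through $\Psi^{-1}$.

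For the ``only if'' part, Theorem \ref{thm3.1} gives $\Phi=u\Psi$, where $u=\Phi(1_{\mathcal A})\in Z(\mathcal B)$ is invertible and $\Psi$ is a unital multiplicative bijection. Since $\mathcal B$ is prime, its center is trivial. Indeed, for a central $z$ and any $\mu\in\sigma(z)$, the element $z-\mu$ is central and non-invertible. If $z\neq\mu$, then for central normal $w=(z-\mu)^*(z-\mu)\ne0$ one gets $w\mathcal B w'=0$ with $w'=g(w)$ for disjoint continuous functions, which contradicts primeness. Hence $u=\varepsilon 1_{\mathcal B}$ with $\varepsilon\neq0$. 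Taking $a=b=c=1_{\mathcal A}$ gives $V_0(\varepsilon^3 1_{\mathcal B})=\{\varepsilon^3\}=V_0(1_{\mathcal A})=\{1\}$, so $\varepsilon^3=1$. Then $V_0(\Psi(a))=V_0(\Phi(a)\Phi(1)\Phi(1))=V_0(a)$ for all $a$.

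Homogeneity of $\Psi$ follows exactly as in Claim 4 of Theorem \ref{thm3.3}, via Proposition \ref{prop2.3}. Additivity is the main obstacle, because the result of \cite{song2014characterizing} is stated for von Neumann algebras. I would first try to invoke a known analogue: a multiplicative bijection from a unital prime algebra containing a nontrivial idempotent is additive, in the spirit of Martindale's theorem. A unital prime $C^*$-algebra of real rank zero that is not $\mathbb C$ contains a nontrivial projection $p$, since otherwise every self-adjoint element has connected spectrum and, by real rank zero, is a scalar. Primeness then gives $p\mathcal A(1-p)\neq0\neq(1-p)\mathcal A p$, and Martindale's conditions hold: $x\mathcal A p=0\Rightarrow x=0$, and similarly for $1-p$, by primeness. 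Then $\Psi$ is additive. The degenerate case $\mathcal A=\mathbb C$ is trivial: there $\mathcal B\cong\mathbb C$ as well, and a multiplicative bijection preserving $V_0$ is the identity, since $V_0(\Psi(\lambda))=\{\lambda\}$.

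With linearity in hand, Claim 5 of Theorem \ref{thm3.3}, which uses Propositions \ref{prop2.5} and \ref{prop2.6}, shows that $\Psi$ maps projections to projections. Claim 6 is the closed-graph argument through Propositions \ref{prop2.6} and \ref{prop2.7}; it used only $C^*$-algebra facts and so carries over verbatim, giving continuity. Finally, real rank zero means every self-adjoint element is a norm limit of real linear combinations of orthogonal projections. By linearity and continuity, $\Psi$ then sends self-adjoint elements to self-adjoint elements, hence $\Psi(x^*)=\Psi(x)^*$, and $\Psi$ is a $*$-isomorphism with $\Phi=\varepsilon\Psi$.
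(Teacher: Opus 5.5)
Your proposal is correct and follows the paper's proof essentially step for step. Theorem~\ref{thm3.1}, together with the trivial center of the prime algebra $\mathcal B$, gives $\Phi=\varepsilon\Psi$ with $\varepsilon^3=1$. Martindale's theorem, applied after extracting a nontrivial projection from real rank zero, gives additivity. Claims 4--6 of Theorem~\ref{thm3.3}, plus the density of finite real combinations of projections among self-adjoint elements, then show that $\Psi$ is a $*$-isomorphism.

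The only difference is that you justify points the paper merely cites or asserts: the trivial center, the existence of a nontrivial projection, Martindale's hypotheses, and the case $\mathcal A=\mathbb C1_{\mathcal A}$. One small correction: in your center argument, the annihilating relation should read $g_1(w)\mathcal B g_2(w)=0$, for disjointly supported continuous $g_1,g_2$, each not identically zero on $\sigma(w)$. This is clearly what you meant.
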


\begin{proof}
The ``if" part is obvious. For the "only if" part, we only present the steps that differ from the proof of Theorem \ref{thm3.4}.

Since $\mathcal B$ is a prime $C^*$-algebra, we have
\(
    Z(\mathcal B)=\mathbb C 1_{\mathcal B}
\)
(Ref. \cite[Proposition 2.6]{omland2014primeness}).
According to Claim 2 in the proof of Theorem \ref{thm3.4}, there exists a nonzero scalar $\varepsilon\in\mathbb C$ such that
\(
    \Phi(1_{\mathcal A})=\varepsilon 1_{\mathcal B}.
\)
Taking $a=b=c=1_{\mathcal A}$ in the hypothesis gives
\[
    V_0(\varepsilon^3 1_{\mathcal B})=V_0\bigl(\Phi(1_{\mathcal A})^3\bigr)=V_0(1_{\mathcal A}).
\]
Hence
\(
    \varepsilon^3=1.
\)
Let $\Psi=\varepsilon^{-1}\Phi$. $\Psi$ is bijective, homogeneous
 and multiplicative.
\(
    \Phi(a)=\varepsilon\Psi(a) \text{~for all a $\in \mathcal{A}$~}.
\)

By the hypothetical conditions,
 $\mathcal A$ is  prime as a ring. Assume that $\mathcal A  \neq \mathbb{C}1_\mathcal A$, then it contains a nontrivial projection because it is of real rank zero.  By \cite{martindale1969when}, it follows that
$\Psi$ is additive and hence an algebraic isomorphism.
Furthermore, by applying Proposition \ref{prop2.5} and a similar argument as in Claims 5-6 in the proof of Theorem \ref{thm3.4},  $\Psi$ is continuous and sends projections into projections. Since $\mathcal A$ is of real rank zero, every self-adjoint element is the limit of finite real linear combination of projections. This implies that $\Psi$ sends self-adjoint elements into self-adjoint elements. So, $\Psi$ is a $*$-isomorphism.

If
\(
\dim\mathcal{A}=1,
\)
we have
\(
\mathcal{B}=\mathbb{C}1_{\mathcal{B}},
\)
and $\Psi(a)\Psi(x)=\Psi(ax)=a\Psi(x)$. Therefore, $\Psi(\lambda 1_{\mathcal A})=\lambda 1_{\mathcal B}$ for all $\lambda\in\mathbb C$, which means that $\Psi$ is still a $*$-isomorphism.
\end{proof}

Particularly, we have

\begin{Corollary}
Let $\mathcal A$ and $\mathcal B$ be  factor von Neumann algebras,
and let $\Phi:\mathcal A\to \mathcal B$ be a surjective map. Then $\Phi$ satisfies
\[
V_0(\Phi(a)\Phi(b)\Phi(c))=V_0(abc) \text{~for all~} a, b, c \in \mathcal{A}
\]
if and only if there exist a scalar $\varepsilon \in\mathbb C$ with $\varepsilon ^3=1$ and a 
$*$-isomorphism $\Psi:\mathcal A\to\mathcal B$ such that
\(
    \Phi=\varepsilon \Psi.
\)
\end{Corollary}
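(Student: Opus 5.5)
The corollary follows by checking that factor von Neumann algebras satisfy the hypotheses of either Theorem \ref{thm3.3} or Theorem \ref{thm3.4}. The ``if'' part is immediate from the sufficiency argument in Theorem \ref{thm3.3}. If $\Phi=\varepsilon\Psi$ with $\varepsilon^3=1$, then $V_0(\Phi(a)\Phi(b)\Phi(c))=V_0(\Psi(abc))$. A $*$-isomorphism preserves $V_0$, since states pull back along $\Psi^{-1}$ and $\Psi$ preserves norms and $a^*a$.

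For the ``only if'' part, a factor has trivial center $Z(\mathcal B)=\mathbb C1_{\mathcal B}$. Hence any central element $u$ with $u^3=1_{\mathcal B}$ is a scalar $\varepsilon1_{\mathcal B}$ with $\varepsilon^3=1$. We split into two cases according to whether $\mathcal A$ is of type $I_1$.

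\emph{Case 1: $\mathcal A$ is not of type $I_1$.} Since $\mathcal A$ is a factor, its only central projections are $0$ and $1$. So having no central summand of type $I_1$ means exactly that $\mathcal A$ is not abelian, i.e.\ $\mathcal A\neq\mathbb C1_{\mathcal A}$. Theorem \ref{thm3.3} then applies directly. It gives $u\in Z(\mathcal B)$ with $u^3=1_{\mathcal B}$ and a $*$-isomorphism $\Psi$ with $\Phi=u\Psi$, and by the remark above $u=\varepsilon 1_{\mathcal B}$.

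\emph{Case 2: $\mathcal A=\mathbb C1_{\mathcal A}$.} Here $\Phi$ is a bijection, by Theorem \ref{thm3.1}, so $\mathcal B$ is one-dimensional, i.e.\ $\mathcal B=\mathbb C1_{\mathcal B}$. We can then argue exactly as in the last paragraph of the proof of Theorem \ref{thm3.4}. Write $\Phi(1_{\mathcal A})=\varepsilon1_{\mathcal B}$; taking $a=b=c=1_{\mathcal A}$ gives $V_0(\varepsilon^3 1_{\mathcal B})=\{1\}$, so $\varepsilon^3=1$. The unital multiplicative bijection $\Psi=\varepsilon^{-1}\Phi$ then satisfies $\Psi(\lambda1)=\lambda\Psi(1)\cdot$ scalar. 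This is seen by comparing $V_0$ of $\Psi(\lambda 1)\Psi(x)$ and $\lambda\Psi(x)$ via Proposition \ref{prop2.3}, as in Claim 4 of Theorem \ref{thm3.3}. It follows that $\Psi(\lambda1_{\mathcal A})=\lambda1_{\mathcal B}$, which is a $*$-isomorphism.

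Alternatively, one could invoke Theorem \ref{thm3.4}, since factors are prime $C^*$-algebras. However, a general factor need not have real rank zero as a $C^*$-algebra in the needed sense beyond norm-density of finite-spectrum self-adjoints. That density does hold in von Neumann algebras, so either route works. The only delicate point is the trivial $I_1$ case, and it is handled as above.
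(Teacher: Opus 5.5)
Your proposal is correct and follows the paper's route: the paper states this corollary right after Theorem \ref{thm3.4} with no separate proof, and it follows from that theorem because a factor is prime (so $Z(\mathcal B)=\mathbb{C}1_{\mathcal B}$) and every von Neumann algebra has real rank zero. For unital $C^*$-algebras, real rank zero is exactly the norm density of finite-spectrum self-adjoint elements, so your hedge on that point is unnecessary; also, the case $\mathcal A=\mathbb{C}1_{\mathcal A}$ is already handled at the end of that proof, which is why your detour through Theorem \ref{thm3.3} forces the separate Case 2.

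One small correction to Case 2: bijectivity of $\Phi$ alone does not make $\mathcal B$ one-dimensional. Use instead that $\Psi$ is a multiplicative bijection, so $\mathcal B$ is commutative and hence $\mathcal B=Z(\mathcal B)=\mathbb{C}1_{\mathcal B}$. Alternatively, $\Phi(1_{\mathcal A})\in Z(\mathcal B)=\mathbb{C}1_{\mathcal B}$ already gives $\varepsilon$, and $\Psi(\lambda 1_{\mathcal A})=\lambda 1_{\mathcal B}$ together with surjectivity yields $\mathcal B=\mathbb{C}1_{\mathcal B}$ at the end.
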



\end{document}